\newtheorem{theorem}{Theorem}[section]
\newtheorem{lemma}{Lemma}[section]
\newtheorem{remark}[theorem]{Remark}
\newtheorem{definition}[theorem]{Definition}
\numberwithin{equation}{section}
\newcommand{\secao}[1]{\section{#1}\setcounter{equation}{0}}
\begin{document}
\today
\title[Coupled third-order nonlinear Schr\"{o}dinger System]{On the Periodic Cauchy problem
for a coupled system of third-order nonlinear Schr\"{o}dinger
equations}
\author{M. Scialom*}
\address{IMECC-UNICAMP\\
13083-970, Campinas, S\~ao Paulo, Brazil}
\email{scialom@ime.unicamp.br}

\author{L. M. Bragan\c {c}a}
\address{DMA-UFV\\  36570-000, Vi\c{c}osa, Minas Gerais, Brazil.}
\email{mendonca@ufv.br}

\keywords{coupled system of third-order nonlinear Schr\"{o}dinger
equations, periodic Cauchy problem, local \& global well-posedness
\\ *Corresponding author: E-mail scialom@ime.unicamp.br, Tel. +55 19 35216042, Fax +55 19 32895766}
\subjclass[2000]{35Q35, 35Q53}
\begin{abstract}
We investigate some well-posedness issues for the initial value
problem (IVP) associated to the system
\begin{equation*}
\left\{
\begin{array}
[c]{l}%
2i\partial_{t}u+q\partial_{x}^{2}u+i\gamma\partial_{x}^{3}u=F_{1}(u,w)\\
2i\partial_{t}w+q\partial_{x}^{2}w+i\gamma\partial_{x}^{3}w=F_{2}(u,w)\text{,}%
\end{array}
\right.
\end{equation*}
where $F_{1}$ and $F_{2}$ are polynomials of degree 3 involving
$u$, $w$ and their derivatives. This system describes the dynamics
of two nonlinear short-optical pulses envelopes $u(x,t)$ and
$w(x,t)$ in fibers (\cite{31}, \cite{14}). We prove periodic local
well-posedness for the IVP with data in Sobolev spaces
$H^{s}(\mathbb{T)\times} H^{s}(\mathbb{T)}$, $ s\geq 1/2$ and
global well-posedness result in Sobolev spaces
$H^{1}(\mathbb{T)\times }H^{1}(\mathbb{T)}$.
\end{abstract}

\maketitle

\secao{Introduction}

Consider the initial value problem (IVP)
\begin{equation}\label{1.1}
\left\{
\begin{array}
[c]{l}%
2i\partial_{t}u+q\partial_{x}^{2}u+i\gamma\partial_{x}^{3}u=F_{1}(u,w),
\; \;\;x\in\mathbb{T}, \;t>0,\\
2i\partial_{t}w+q\partial_{x}^{2}w+i\gamma\partial_{x}^{3}w=F_{2}(u,w)\text{,}\\
u(x,0)=u_{0},\text{\hskip10pt} w(x,0)=w_{0}\text{,}%
\end{array}
\right.
\end{equation}
where $u$, $w$ are complex valued functions,
$$
F_{1}(u,w)\!=\!-2i\beta(|u|^2+\sigma_{\beta}|w|^2)  \partial_{x}u-2\alpha u(|u| ^{2}%
+\sigma_{\alpha}|w| ^{2})-2i\mu u\partial_{x}(|u|^2+\sigma_{\mu}|w| ^{2}),
$$
$F_{2}(u,w)=F_{1}(w,u)$ and $q$, $\gamma$, $\beta$, $\mu$, $\alpha$, $\sigma_{\alpha}$,
$\sigma_{\beta}$ and $\sigma _{\mu}$ are real parameters.

This system describes the dynamics of two nonlinear short-optical
pulses envelopes $u(x,t)$ and $w(x,t)$ in fibers. This model
is formed by a pair of Schr\"odinger-Korteweg-de Vries equations
coupled through nonlinear terms and it was derived by Porsezian,
Shanmugha Sundaram and Mahalingam \cite{31}. It generalizes
the model (\ref{1.3}) derived by Hasegawa-Kodama \cite{14}.

There is a growing interest in studying the propagation of optical
soliton pulses in fiber. This is because of their potential
applications in fiber-optic-based communication systems. The idea
of soliton based all-optical communication systems with loss
compensated by optical amplifications has provided hints of
potential advantage for solitons over conventional systems. The
major attraction for the soliton communication system arises from
the fact that repeater spacing for this kind of system could be
much larger than that required by the conventional systems.

The system above has been previously studied (\cite{28} and
\cite{39}) in the particular case
$\sigma_{\alpha}=\sigma_{\beta}=\sigma_{\mu}=1$ and the system of
Hirota and Hirota-Satsuma studied by \cite{2} and \cite{1}
respectively. Radhakrishnan and Lakshmanan \cite{33} used the
following transformation of variables in system (\ref{1.1})
\begin{align*}
u(x,t)  &  =u_{1}\left(  x-\frac{q^{2}}{6\gamma}t,t\right)  \exp
i\left(
\frac{q}{3\gamma}x-\frac{q^{3}}{27\gamma^{2}}t\right), \\
w(x,t)  &  =w_{1}\left(  x-\frac{q^{2}}{6\gamma}t,t\right)  \exp
i\left(
\frac{q}{3\gamma}x-\frac{q^{3}}{27\gamma^{2}}t\right)  \text{,}%
\end{align*}
under the particular conditions $\sigma_{\alpha}=\sigma_{\beta}$ and
$q\beta =3\gamma\alpha$, to obtain the following form of coupled
envelope equations
corresponding to the system (\ref{1.1})
\begin{equation}\label{1.2}
\begin{cases}
2\partial_{t}u_1\!+\!\gamma\partial_{x}^{3}u_{1}\!+\! 2\beta\left(|u_1|^{2}\!+
\!\sigma_{\beta}|w_1|^{2}\right)
\partial_{x}u_1\!+\!2\mu u_{1}\partial_{x}(|u_1|^2\!+\!\sigma_{\mu}|w_1|^2)\!=\!0,\\
2\partial_{t}w_{1}\!+\!\gamma\partial_{x}^{3}w_{1}\!+\!2\beta\left(|w_1| ^{2}\!+
\!\sigma_{\beta}|u_1|^2\right)
\partial_{x}w_{1}\!+\!2\mu w_{1}\partial_{x}(|w_1|^2\!+\!\sigma_{\mu}|u_1|^2)\!=\!0.
\end{cases}
\end{equation}

Then, they applied the Hirota bilinear transformation (see \cite{17}) to
(\ref{1.2}) to construct dark and bright soliton solutions to (\ref{1.1})
assuming further that $\beta=\mu$, $q\beta=3\gamma\alpha$, $\gamma\neq0$ and
$\sigma_{\alpha}=\sigma_{\beta}=\sigma_{\mu}=1$. Recently,
Porsezian and Kalithasan \cite{32} discussed the construction of
new cnoidal wave solutions and found exact solutions of both bright
and dark solitary wave to system (\ref{1.1}).

As far as we know, the previous works in this subject do not
address well-posedness issues for the system (\ref{1.1}), so our
aim is to fill up this gap.

Note that if the pulse
 $w_{1}=0$, the system (\ref{1.2}) reduces to the well known
 modified complex KdV equation. This fact suggests that the results obtained
 for the periodic modified
 KdV equation should be the ones we expect for the system (\ref{1.1}).

When $w=0$ the system (\ref{1.1}) reduces to equation
\begin{equation}\label{1.3}
i\partial_{t}u+\frac{q}{2}\partial_{x}^{2}u+i\frac{\gamma}{2}\partial_{x}%
^{3}u+\alpha u| u| ^{2}+i\left(
\beta+\mu\right) | u| ^{2}\partial_{x}u+i\mu
u^{2}\partial_{x}\overline
{u}=0\text{,}
\end{equation}
which describes the dynamics of one single nonlinear pulse in an
optic fiber.

The initial value problem associated to equation  (\ref{1.3}) was
considered by several authors (\cite{26}, \cite{35}, \cite{6},
\cite{38}, \cite{7} and \cite{8}) in $H^{s}(\mathbb{R})$, where $
s\geq 1/4$ is the best result.

In the case of system (\ref{1.1}), Scialom and Bragan\c{c}a
\cite{4a} obtained local well-posedness solution in Sobolev spaces
$H^{s}(\mathbb{R)\times }H^{s}(\mathbb{R)}$, $ s\geq 1/4$, and
global well-posedness in $H^{s}(\mathbb{R)\times
}H^{s}(\mathbb{R)}$,  $s > 3/5$.

In the periodic setting, Takaoka in \cite{38}, considered the IVP
$(1.3)$ and showed local well-posedness in $H^{s}(\mathbb{T)}$,
$s\geq \frac{1}{2}$.

For the IVP associated to system (\ref{1.1}), we are able to
obtain local well-posedness for initial data in
$H^{s}(\mathbb{T)\times} H^{s}(\mathbb{T)}$, $s\geq \frac{1}{2}$
as in the single equation case. The approach we use is similar to
the one given in \cite{38} though the presence of the coupled
terms in (\ref{1.1}) make the estimates more involved.

To describe our local result we need the definitions and
notations.

\begin{definition}
Let $\mathcal{P}=\mathcal{C}^{\infty}\left(  \mathbb{T}\right)
=\left\{ g:\mathbb{R}\rightarrow\mathbb{C};\text{
}g\in\mathcal{C}^{\infty}\text{periodic with period} 2\pi\right\}
$. $\mathcal{P}^{\prime}$(dual of $\mathcal{P}$) is the collection
of all linear functionals  from $\mathcal{P}$ to $\mathbb{C}$.
$\mathcal{P}^{\prime}$ is periodic distributions. If
$g\in\mathcal{P}^{\prime}$ denote the value of  $g$ in $\varphi$
by $g\left( \varphi\right) =\langle g,\varphi\rangle$. Consider
the functions  $\theta_{n}\left( x\right) =e^{inx}$,
$n\in\mathbb{Z}$ and $x\in\mathbb{R}$. The Fourier transform
$g\in\mathcal{P}^{\prime}$ is the function
$\widehat{g}:\mathbb{Z}\rightarrow\mathbb{C}$ defined by
$\widehat{g}\left( n\right)  =\langle g,\theta_{-n}\rangle$. If
$g$ is periodic of period $2\pi$, for instance, $g\in L^{1}\left(
\mathbb{T}\right)  $ then
\[
\widehat{g}(n)=\int_{\mathbb{T}}e^{-ixn}g(x)dx=\frac{1}{2\pi}\int
\nolimits_{0}^{2\pi}e^{-ixn}g(x)dx\text{,}%
\]
where $\mathbb{T=R}/2\pi\mathbb{Z}$ represents the one dimensional
torus.
\end{definition}

\begin{definition}
Let $s\in\mathbb{R}$, the Sobolev space $H^{s}(\mathbb{T)}$ is the
set of all $g\in\mathcal{P}^{\prime}$ such that
\[
\| g\| _{H^{s}(\mathbb{T})}=\left(2\pi\sum\limits_{n\in
\mathbb{Z}}(1+| n| ^{2})^{s}| \widehat{g}(n)| ^{2}\right)
^{\frac{1}{2}}<\infty.
\]
In this work we assume that $g$ is periodic of period $2\pi$.
\end{definition}

\begin{definition}
We denote by $\widetilde{f}$ the Fourier transform of $f$ in
relation to space-time variables
\[
\widetilde{f}(n,\tau)= {\displaystyle\int\nolimits_{\mathbb{R}}}
\int_{\mathbb{T}}e^{-i(xn+t\tau)}f(x,t)dxdt\text{.}%
\]
The inverse Fourier transform is given by
\[
f(x,t)=\sum\limits_{n\in\mathbb{Z}}e^{inx}
{\displaystyle\int\nolimits_{\mathbb{R}}}
e^{it\tau}\widetilde{f}(n,\tau)d\tau\text{.}%
\]
The Fourier transform of $fgh$, where $f=f(x,t)$, $g=g(x,t)$ and
$h=h(x,t)$ are periodic functions with respect to  the $x$
variable obtained as
\[
\widetilde{fgh}(n,\tau)\!=\!(\widetilde{f}\ast\widetilde{g}\ast\widetilde{h})(n,\tau)
\!=\!\sum_{n_{1}\in\mathbb{Z}}\sum_{n_{2}\in\mathbb{Z}}\iint\nolimits_{\mathbb{R}
^{2}}\widetilde{f}(n_{1},\tau_{1})\widetilde{g}(n_{2},\tau_{2})\widetilde
{h}(n-n_{1}-n_{2},\tau-\tau_{1}-\tau_{2})d\tau_{1}d\tau_{2}\text{.}
\]

\end{definition}

\begin{definition}
Let $\mathcal{V}$ be the space of functions $f$ such that
\begin{itemize}
\item[(i)]  $f:\mathbb{T}\times\mathbb{R}\rightarrow\mathbb{C}$,
\item[(ii)]  $f(x,\cdot)\in\mathcal{S}(\mathbb{R})$ for each
$x\in\mathbb{T}$, \item[(iii)]  $f(\cdot,t)\in
C^{\infty}(\mathbb{T)}$ for each $t\in\mathbb{R}$.
\end{itemize}
We define the space $X_{s,b}$ associated to operator
$\partial_{t}-i\frac{q}{2}
\partial_{x}^{2}+\partial_{x}^{3}+c_{0}\partial_{x}$ as the completion of $\mathcal{V}$
with respect to the following norm%
\[
\| f\| _{X_{s,b}}:=\| f\| _{(1,s,b)}=\| \langle
n\rangle^{s}\langle
\tau-n^{3}+\frac{q}{2}n^{2}+c_{0}n\rangle^{b}\widetilde{f}(n,\tau)\|
_{l_{n}^{2}L_{\tau}^{2}}\text{,}
\]
where $\langle n\rangle=(1+|n|^{2})^{\frac{1}{2}}$ and $s$, $b$,
$c_{0}\in\mathbb{R}$.

The space $Z_{s,b}$ is the completion of $\mathcal{V}$ with
respect to the norm
\[
\| f\| _{Z_{s,b}}:=\| f\| _{(2,s,b)}=\| \langle
n\rangle^{s}\langle
\tau-n^{3}+\frac{q}{2}n^{2}+c_{0}n\rangle^{b}\widetilde{f}(n,\tau)\|
_{l_{n}^{2}L_{\tau}^{1}}\text{,}
\]
and we consider the space $Y_{s}=X_{s,\frac12}\cap Z_{s,0}$ with
the norm
\[
\| f\| _{Y_{s}}=\| f\| _{(1,s,%
\frac12
)}+\| f\| _{(2,s,0)}\text{.}
\]
\end{definition}

\begin{remark}
Note that for $b>\frac{1}{2}$ we have that  $X_{s,b}\subset C(\mathbb{R}_{t};H^{s}%
(\mathbb{T}))$ and
 for $b=0$, we have $Z_{s,0}\subset$
$C(\mathbb{R}_{t};H^{s}(\mathbb{T}))$ and $Y_{s}\subset C(\mathbb{R}_{t}%
;H^{s}(\mathbb{T}))$.

The space $Y_{s}^{T}=\left\{  f\mid_{\lbrack-T,T]}\,:\,f\in
Y_{s}\right\} $ with the norm
\[
\| f\| _{Y_{s}^{T}}=\inf\left\{  \|g\| _{Y_{s}}\;:\;
g\mid_{_{\lbrack-T,T]}}=f\text{\hskip5pt and\hskip5pt}g\in
Y_{s}\right\},
\]
satisfies $Y_{s}^{T}\subset C([-T,T];H^{s}(\mathbb{T}))$.
\end{remark}

Now, we are in position to state the local result.

\begin{theorem}\label{teo1.7}
Suppose that $\frac{q}{3}\notin\mathbb{Z}$. If $s\geq\frac{1}{2}$
and $\overrightarrow{u_0}=\left(  u_{0},w_{0}\right)  \in H^{s}(\mathbb{T}%
)\times H^{s}(\mathbb{T})$, then there exist $T(\| \overrightarrow{u}%
_{0}\| _{H^{s}})>0$ and a unique solution
$\overrightarrow{u}=(u,w)$ to the IVP (\ref{1.1}) in the case
$(\beta+\mu)=\beta\sigma_{\beta}$ satisfying
\[
\overrightarrow{u}\in C([-T,T]:H^{s}(\mathbb{T})\times H^{s}(\mathbb{T}%
))\text{,\quad}\overrightarrow{u}\in Y_{s}\times Y_{s}\text{,}%
\]
where $c_{0}=$ $(\beta+\mu)\| \overrightarrow{u_0}\|
_{L_{x}^{2}}^{2}$ in the definition of $Y_{s}$.

For each $T^{\prime}\in(0,T)$, there exists $\epsilon>0$ such that
the map $\overrightarrow{{v}_{0}}\mapsto\overrightarrow{v}$ is
Lipschitz continuous from
\[
\left\{  \overrightarrow{{v}_{0}}\in H^{s}(\mathbb{T})\times H^{s}
(\mathbb{T}):\| \overrightarrow{{v}_{0}}-\overrightarrow{{u}
_{0}}\| _{H^{s}}<\epsilon\right\}
\]
to
\[
\left\{  \overrightarrow{v}:\| \overrightarrow{v}-\overrightarrow
{u}\| _{L_{T^{\prime}}^{\infty}H^{s}}+\| \Psi_{T^{\prime}%
}(\overrightarrow{v}-\overrightarrow{u})\| _{Y_{s}}<\infty\right\}
\text{.}%
\]

\end{theorem}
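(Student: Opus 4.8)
The plan is to run a contraction-mapping argument for the Duhamel formulation in the Bourgain-type space $Y_s\times Y_s$, treating the two components together and following the scheme used by Takaoka for the scalar equation (\ref{1.3}). Since (\ref{1.1}) degenerates to a modified-KdV-type equation when $w=0$, the cubic terms contain a resonant, non-dispersive part that blocks any trilinear estimate at the endpoint $s=\frac12$, so the first step must be a gauge transformation. Collecting the $\beta$- and $\mu$-terms of $F_1$ and writing $u\,\partial_x(|u|^2)=|u|^2\partial_x u+u^2\partial_x\overline u$, one finds that the resonant interaction (output frequency equal to the frequency of $u$, the remaining pair replaced by its mean) has coefficient $(\beta+\mu)P_0(|u|^2)+\beta\sigma_\beta P_0(|w|^2)$ acting on $\partial_x u$, where $P_0$ denotes the zeroth Fourier mode. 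The hypothesis $(\beta+\mu)=\beta\sigma_\beta$ is exactly what forces this to equal $(\beta+\mu)P_0(|u|^2+|w|^2)$, i.e.\ a constant proportional to the conserved mass $\|\overrightarrow u\|_{L^2}^2=\|\overrightarrow{u_0}\|_{L^2}^2$; hence a single spatial translation $v(x,t)=u(x+c_0t,t)$ with $c_0=(\beta+\mu)\|\overrightarrow{u_0}\|_{L^2}^2$ (equivalently a phase on $\widetilde u$) removes the resonant transport from both equations by absorbing it into the linear operator $\partial_t-i\frac q2\partial_x^2+\partial_x^3+c_0\partial_x$. This is precisely why $c_0$ appears in the statement.

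After gauging I would pass to the integral equation, insert a smooth cutoff $\psi_T$ localizing to $[-T,T]$, and define the solution map $\Phi$. The linear estimates — boundedness of the free group and of the Duhamel operator on $Y_s=X_{s,1/2}\cap Z_{s,0}$, with a positive power of $T$ to gain smallness — are the componentwise analogues of the scalar ones. Keeping the auxiliary piece $Z_{s,0}$ (the $\ell^2_nL^1_\tau$ norm) inside $Y_s$ is the standard device for the borderline index $b=\frac12$, where $X_{s,1/2}$ alone fails to embed into $C_tH^s_x$; it is what lets the endpoint estimates close.

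The heart of the argument, and the step I expect to be the \emph{main obstacle}, is the trilinear estimate
\[
\|F_j(u,w)\|_{(1,s,-1/2)}+\|F_j(u,w)\|_{(2,s,-1)}\;\lesssim\;\big(\|u\|_{Y_s}+\|w\|_{Y_s}\big)^3
\]
for the gauged nonlinearities at $s=\frac12$. The dangerous monomials are those carrying the derivative on the highest-frequency factor; for these the lost derivative must be recovered from the modulation weights via the factorization of the resonance function attached to $n=n_1-n_2+n_3$, namely $n^3-n_1^3+n_2^3-n_3^3=3(n_1-n_2)(n_3-n_2)(n_1+n_3)$, together with the quadratic correction coming from $\frac q2\partial_x^2$. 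The non-resonance hypothesis $\frac q3\notin\mathbb{Z}$ enters here: it prevents the quadratic part from cancelling the cubic part at integer frequencies, so a usable lower bound on the resonance survives off the gauged-away resonant set. Relative to the scalar case the new difficulty is only \emph{combinatorial} — each product now mixes $u,w,\overline u,\overline w$, so the dyadic case analysis (high-high-low versus high-low-high configurations, resonant versus non-resonant modulation) must be run on every mixed triple — but the symmetry $F_2(u,w)=F_1(w,u)$ and the identical dispersion of the two components reduce every mixed case to a scalar one already under control.

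Granting the linear and trilinear estimates, $\Phi$ is a contraction on a ball of $Y_s^T\times Y_s^T$ for $T=T(\|\overrightarrow{u_0}\|_{H^s})$ small, which yields existence and uniqueness in that class; undoing the gauge returns $\overrightarrow u$ with the stated regularity $\overrightarrow u\in C([-T,T];H^s\times H^s)\cap(Y_s\times Y_s)$. Finally the Lipschitz dependence of $\overrightarrow{v_0}\mapsto\overrightarrow v$ for $T'\in(0,T)$ follows by applying the same multilinear bounds to the difference of two solutions, together with the Lipschitz continuity in $L^2$ of the gauge, which controls how $c_0$ varies with the data.
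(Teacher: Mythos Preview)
Your proposal is correct and mirrors the paper's proof: the paper also absorbs the resonant transport into the linear operator via $c_0=(\beta+\mu)\|\overrightarrow{u_0}\|_{L^2}^2$ (using precisely the hypothesis $(\beta+\mu)=\beta\sigma_\beta$ to make the coefficients on $P_0(|u|^2)$ and $P_0(|w|^2)$ match), then establishes the trilinear bounds in $Y_s=X_{s,1/2}\cap Z_{s,0}$ via the factorization $q_+(n,\tau)-q_+(n_1,\tau_1)-q_-(n_2,\tau_2)-q_+(n_3,\tau_3)=-3(n_1+n_2)(n-n_1)(n-n_2-\tfrac{q}{3})$, where $\tfrac{q}{3}\notin\mathbb Z$ guarantees the last factor is comparable to $\langle n-n_2\rangle$, and closes with a contraction on a ball in $Y_s\times Y_s$ using a $\psi_T$ cutoff to gain a positive power of $T$. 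Your only imprecision is in the phrasing of the non-resonance: it is not that the quadratic part ``cancels'' the cubic part, but that the combined resonance function already \emph{factors} with one linear factor shifted by $q/3$, and the integrality condition keeps that factor bounded below.
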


We notice that the solution flow of (\ref{1.1}) is invariant by
the following quantities in the case $\sigma_{\alpha}
=\sigma_{\beta}=\sigma_{\mu}=1$.
\begin{equation}\label{1.9}
I_{1}(u,w)=\|u\| _{L^{2}\left(  \Omega\right)  } ^{2}+\| w\|
_{L^{2}\left(  \Omega\right)  }^{2}=I_{1} (u_{0},w_{0})
\end{equation}
and
\begin{equation}\label{1.10}
\begin{split}
I_{2}(u,w)=&i(-3\gamma\alpha\!+\!\beta q\!+\!2\mu q)
\int_{\Omega}(u\overline{u}_{x}+w\overline{w}_{x})dx
+\frac{3}{2}\gamma\int_{\Omega}(|u_{x}|^2+|w_{x}|^2)\,dx\\
&+\frac{1}{2}\left(\beta+2\mu\right)\int_{\Omega}(|u|^4+|w|^4)\,dx
+\left(\beta+2\mu\right)  \int_{\Omega}  |u|^{2} | w| ^{2}dx\\
=&I_{2}(u_{0},w_{0})\,
\end{split}
\end{equation}
for either $\Omega=\mathbb{R}$ or $\mathbb{T}$, see \cite{4a}.
These quantities allow us to extend our local result globally.
This is what is contained in the next result.

Before stating our main results, let us define the notation that
will be used throughout this work.

\begin{definition}
Let $\mathcal{P}=\mathcal{C}^{\infty}\left(  \mathbb{T}\right)
=\left\{ g:\mathbb{R}\rightarrow\mathbb{C};\text{
}g\in\mathcal{C}^{\infty}\text{periodic with period} 2\pi\right\}
$. $\mathcal{P}^{\prime}$(dual of $\mathcal{P}$) is the collection
of all linear functionals  from $\mathcal{P}$ to $\mathbb{C}$.
$\mathcal{P}^{\prime}$ is periodic distributions. If
$g\in\mathcal{P}^{\prime}$ denote the value of  $g$ in $\varphi$
by $g\left( \varphi\right) =\langle g,\varphi\rangle$. Consider
the functions  $\theta_{n}\left( x\right) =e^{inx}$,
$n\in\mathbb{Z}$ and $x\in\mathbb{R}$. The Fourier transform
$g\in\mathcal{P}^{\prime}$ is the function
$\widehat{g}:\mathbb{Z}\rightarrow\mathbb{C}$ defined by
$\widehat{g}\left( n\right)  =\langle g,\theta_{-n}\rangle$. If
$g$ is periodic of period $2\pi$, for instance, $g\in L^{1}\left(
\mathbb{T}\right)  $ then
\[
\widehat{g}(n)=\int_{\mathbb{T}}e^{-ixn}g(x)dx=\frac{1}{2\pi}\int
\nolimits_{0}^{2\pi}e^{-ixn}g(x)dx\text{,}%
\]
where $\mathbb{T=R}/2\pi\mathbb{Z}$ represents the one dimensional
torus.
\end{definition}

\begin{definition}
Let $s\in\mathbb{R}$, the Sobolev space $H^{s}(\mathbb{T)}$
is the set of all $g\in\mathcal{P}^{\prime}$ such that
\[
\| g\| _{H^{s}(\mathbb{T})}=\left(2\pi\sum\limits_{n\in \mathbb{Z}}(1+| n|
^{2})^{s}| \widehat{g}(n)| ^{2}\right)  ^{\frac{1}{2}}<\infty.
\]
In this work we assume that $g$ is periodic of period $2\pi$.
\end{definition}

\begin{definition}
We denote by $\widetilde{f}$ the Fourier transform of $f$ in
relation to space-time variables
\[
\widetilde{f}(n,\tau)=
{\displaystyle\int\nolimits_{\mathbb{R}}}
\int_{\mathbb{T}}e^{-i(xn+t\tau)}f(x,t)dxdt\text{.}%
\]
The inverse Fourier transform is given by
\[
f(x,t)=\sum\limits_{n\in\mathbb{Z}}e^{inx}
{\displaystyle\int\nolimits_{\mathbb{R}}}
e^{it\tau}\widetilde{f}(n,\tau)d\tau\text{.}%
\]
The Fourier transform of $fgh$, where $f=f(x,t)$, $g=g(x,t)$ and
$h=h(x,t)$ are periodic functions with respect to  the $x$ variable obtained as
\[
\widetilde{fgh}(n,\tau)\!=\!(\widetilde{f}\ast\widetilde{g}\ast\widetilde{h})(n,\tau)
\!=\!\sum_{n_{1}\in\mathbb{Z}}\sum_{n_{2}\in\mathbb{Z}}\iint\nolimits_{\mathbb{R}
^{2}}\widetilde{f}(n_{1},\tau_{1})\widetilde{g}(n_{2},\tau_{2})\widetilde
{h}(n-n_{1}-n_{2},\tau-\tau_{1}-\tau_{2})d\tau_{1}d\tau_{2}\text{.}
\]

\end{definition}

\begin{definition}
Let $\mathcal{V}$ be the space of functions $f$ such that
\begin{itemize}
\item[(i)]  $f:\mathbb{T}\times\mathbb{R}\rightarrow\mathbb{C}$,
\item[(ii)]  $f(x,\cdot)\in\mathcal{S}(\mathbb{R})$ for each $x\in\mathbb{T}$,
\item[(iii)]  $f(\cdot,t)\in C^{\infty}(\mathbb{T)}$ for each $t\in\mathbb{R}$.
\end{itemize}
We define the space $X_{s,b}$ associated to operator
$\partial_{t}-i\frac{q}{2}
\partial_{x}^{2}+\partial_{x}^{3}+c_{0}\partial_{x}$ as the completion of $\mathcal{V}$
with respect to the following norm%
\[
\| f\| _{X_{s,b}}:=\| f\| _{(1,s,b)}=\| \langle
n\rangle^{s}\langle
\tau-n^{3}+\frac{q}{2}n^{2}+c_{0}n\rangle^{b}\widetilde{f}(n,\tau)\|
_{l_{n}^{2}L_{\tau}^{2}}\text{,}
\]
where $\langle n\rangle=(1+|n|^{2})^{\frac{1}{2}}$ and $s$, $b$, $c_{0}\in\mathbb{R}$.

The space $Z_{s,b}$ is the completion of $\mathcal{V}$ with respect to the norm
\[
\| f\| _{Z_{s,b}}:=\| f\| _{(2,s,b)}=\| \langle
n\rangle^{s}\langle
\tau-n^{3}+\frac{q}{2}n^{2}+c_{0}n\rangle^{b}\widetilde{f}(n,\tau)\|
_{l_{n}^{2}L_{\tau}^{1}}\text{,}
\]
and we consider the space $Y_{s}=X_{s,\frac12}\cap Z_{s,0}$ with the norm
\[
\| f\| _{Y_{s}}=\| f\| _{(1,s,%
\frac12
)}+\| f\| _{(2,s,0)}\text{.}
\]
\end{definition}

\begin{remark}
Note that for $b>\frac{1}{2}$ we have that  $X_{s,b}\subset C(\mathbb{R}_{t};H^{s}%
(\mathbb{T}))$ and
 for $b=0$, we have $Z_{s,0}\subset$
$C(\mathbb{R}_{t};H^{s}(\mathbb{T}))$ and $Y_{s}\subset C(\mathbb{R}_{t}%
;H^{s}(\mathbb{T}))$.

The space $Y_{s}^{T}=\left\{  f\mid_{\lbrack-T,T]}\,:\,f\in Y_{s}\right\} $
with the norm
\[
\| f\| _{Y_{s}^{T}}=\inf\left\{  \|g\| _{Y_{s}}\;:\;
g\mid_{_{\lbrack-T,T]}}=f\text{\hskip5pt and\hskip5pt}g\in Y_{s}\right\},
\]
satisfies $Y_{s}^{T}\subset C([-T,T];H^{s}(\mathbb{T}))$.
\end{remark}

Now, we are in position to state the main results of this work.

\begin{theorem}\label{teo1.7}
Suppose that $\frac{q}{3}\notin\mathbb{Z}$. If $s\geq\frac{1}{2}$
and $\overrightarrow{u_0}=\left(  u_{0},w_{0}\right)  \in H^{s}(\mathbb{T}%
)\times H^{s}(\mathbb{T})$, then there exist $T(\| \overrightarrow{u}%
_{0}\| _{H^{s}})>0$ and a unique solution
$\overrightarrow{u}=(u,w)$ to the IVP (\ref{1.1}) in the case
$(\beta+\mu)=\beta\sigma_{\beta}$ satisfying
\[
\overrightarrow{u}\in C([-T,T]:H^{s}(\mathbb{T})\times H^{s}(\mathbb{T}%
))\text{,\quad}\overrightarrow{u}\in Y_{s}\times Y_{s}\text{,}%
\]
where $c_{0}=$ $(\beta+\mu)\|
\overrightarrow{u_0}\| _{L_{x}^{2}}^{2}$ in the
definition of $Y_{s}$.

For each $T^{\prime}\in(0,T)$, there exists $\epsilon>0$ such that
the map $\overrightarrow{{v}_{0}}\mapsto\overrightarrow{v}$ is
Lipschitz continuous from
\[
\left\{  \overrightarrow{{v}_{0}}\in H^{s}(\mathbb{T})\times H^{s}
(\mathbb{T}):\| \overrightarrow{{v}_{0}}-\overrightarrow{{u}
_{0}}\| _{H^{s}}<\epsilon\right\}
\]
to
\[
\left\{  \overrightarrow{v}:\|
\overrightarrow{v}-\overrightarrow
{u}\| _{L_{T^{\prime}}^{\infty}H^{s}}+\| \Psi_{T^{\prime}%
}(\overrightarrow{v}-\overrightarrow{u})\|
_{Y_{s}}<\infty\right\}
\text{.}%
\]

\end{theorem}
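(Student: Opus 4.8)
The plan is to recast \eqref{1.1} in Duhamel form and solve it by a contraction mapping in $Y_s^T\times Y_s^T$, following the Fourier restriction norm method of Bourgain as adapted by Takaoka in \cite{38}. First I would normalize the linear part (dividing by $2i$ and rescaling away $\gamma$) so that each equation is driven by the unitary group $U(t)$ associated with the operator $\partial_t-i\frac q2\partial_x^2+\partial_x^3+c_0\partial_x$ that defines $X_{s,b}$. The constant $c_0$ is not free: each cubic derivative nonlinearity $|u|^2\partial_x u$, $\sigma_\beta|w|^2\partial_x u$ and $\mu\,u\,\partial_x(|u|^2+\sigma_\mu|w|^2)$ produces a resonant self-interaction whose Fourier support, on the diagonal $n_2=n_1$ or $n_2=n_3$ of $\{n=n_1-n_2+n_3\}$, collapses the trilinear sum onto a multiple of $\|u\|_{L^2}^2\partial_x u$ and $\|w\|_{L^2}^2\partial_x u$. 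The structural hypothesis $(\beta+\mu)=\beta\sigma_\beta$ is exactly what makes these pieces combine into $(\beta+\mu)\|\overrightarrow u\|_{L^2}^2\partial_x u$ with a single common constant; since $c_0$ is fixed by the data and the total mass $\|\overrightarrow u\|_{L^2}^2=\|u\|_{L^2}^2+\|w\|_{L^2}^2$ is conserved along the flow by \eqref{1.9}, I set $c_0=(\beta+\mu)\|\overrightarrow{u_0}\|_{L^2}^2$ and transfer this resonant term to the linear side. What remains to estimate is the genuinely non-resonant nonlinearity $\overrightarrow N(\overrightarrow u)$.

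Second, I would record the two linear estimates in the $Y_s$ scale. With $\psi$ a smooth time cutoff one has the homogeneous bound $\|\psi\,U(\cdot)\overrightarrow u_0\|_{Y_s}\lesssim\|\overrightarrow u_0\|_{H^s}$ and the inhomogeneous (Duhamel) bound $\|\psi\int_0^t U(t-t')F(t')\,dt'\|_{Y_s}\lesssim\|F\|_{X_{s,-1/2}}+\|F\|_{Z_{s,-1}}$, both standard once the group structure is fixed; restricting to $[-T,T]$ gains a small positive power $T^{\theta}$ in the inhomogeneous estimate, and this is what ultimately closes the contraction and produces the existence time $T(\|\overrightarrow u_0\|_{H^s})$. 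The hypothesis $\frac q3\notin\Z$ enters here: it prevents the phase $\tau-n^3+\frac q2 n^2+c_0 n$ from admitting the degenerate coincidences that would spoil the underlying periodic $L^4$ and $L^6$ smoothing inequalities.

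The heart of the argument, and the step I expect to be hardest, is the multilinear estimate
\[
\|\overrightarrow N(\overrightarrow u)\|_{X_{s,-1/2}}+\|\overrightarrow N(\overrightarrow u)\|_{Z_{s,-1}}\lesssim\|\overrightarrow u\|_{Y_s\times Y_s}^3,
\]
together with its difference (Lipschitz) version. Because the dangerous terms carry a derivative and the torus furnishes only weak dispersion, the plain $X_{s,-1/2}$ dual estimate fails on the near-resonant region; this is precisely why $Y_s$ carries the auxiliary $Z_{s,0}$ component and why the near-resonant output must instead be placed in $Z_{s,-1}$. Concretely I would split each trilinear interaction according to the sizes of the modulations $\langle\tau_j-n_j^3+\frac q2 n_j^2+c_0 n_j\rangle$, use the elementary factorization of $n^3-n_1^3+n_2^3-n_3^3$ on $\{n=n_1-n_2+n_3\}$ to recover the missing derivative away from the diagonal, and bound the output by the periodic $L^4_{x,t}$ (Bourgain) and $L^6_{x,t}$ estimates. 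The coupling terms $\sigma_\beta|w|^2\partial_x u$, $u^2\partial_x\overline u$ and $u\,\partial_x(|w|^2)$ are treated by the same case analysis, since every factor is measured in the same $Y_s$ norm; the coupled structure multiplies the number of cases but not their difficulty. The threshold $s\ge\frac12$ is dictated by the diagonal-adjacent frequencies, matching the single-equation result of \cite{38}.

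Finally, with these estimates in hand the map $\overrightarrow u\mapsto\psi\,U(\cdot)\overrightarrow u_0-\psi\int_0^t U(t-t')\overrightarrow N(\overrightarrow u)\,dt'$ is a contraction on a ball of $Y_s^T\times Y_s^T$ for $T$ small depending on $\|\overrightarrow u_0\|_{H^s}$, and its fixed point is the desired solution, lying in $C([-T,T];H^s(\T)\times H^s(\T))$ by the embedding $Y_s^T\subset C([-T,T];H^s(\T))$ recorded in the Remark above. Uniqueness follows from the difference estimate on the same ball, and the Lipschitz dependence of $\overrightarrow{v_0}\mapsto\overrightarrow v$ claimed in the statement is read off from the contraction estimates applied to two data, after observing that the gauge constants attached to nearby data differ by $O(\|\overrightarrow{v_0}-\overrightarrow{u_0}\|_{L^2})$ and thus contribute only lower-order perturbations.
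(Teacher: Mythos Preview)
Your proposal is correct and follows essentially the same architecture as the paper: gauge away the resonant piece via the choice $c_0=(\beta+\mu)\|\overrightarrow{u_0}\|_{L^2}^2$ (which requires $(\beta+\mu)=\beta\sigma_\beta$), prove homogeneous and Duhamel linear estimates in $Y_s$, establish the trilinear bound $\|G_1\|_{X_{s,-1/2}}+\|\langle n\rangle^s\langle q_+\rangle^{-1}\widetilde{G_1}\|_{l^2_nL^1_\tau}\lesssim f(u,w)$, and close by contraction using a time-localization lemma to gain $T^{\theta-\theta'}$.

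Two small points where your description diverges from the paper's implementation. First, the paper does not invoke periodic $L^4$ or $L^6$ Strichartz estimates; the trilinear bound is proved by direct Cauchy--Schwarz in Fourier variables, splitting into the four regions $A_1,\dots,A_4$ according to which modulation dominates and into three frequency regimes for $M_1,L,M_2$, then reducing to the scalar sums $I_1,I_2\le c$ handled exactly as in Takaoka \cite{38}, Lemma~4.3. Second, the hypothesis $\frac q3\notin\Z$ is not used to salvage Strichartz bounds but enters precisely in the resonance identity
\[
q_+(n,\tau)-q_+(n_1,\tau_1)-q_-(n_2,\tau_2)-q_+(n_3,\tau_3)=-3(n_1+n_2)(n-n_1)\Big(n-n_2-\tfrac q3\Big),
\]
guaranteeing $|n-n_2-\tfrac q3|\sim\langle n-n_2\rangle$ so that the third factor never vanishes and the derivative loss can be absorbed. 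These are refinements of emphasis rather than gaps; the contraction argument you sketch is exactly the one carried out.
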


Combining the local well-posedness result with the conservation
laws the following global result follows.

\begin{theorem}\label{teo1.8}
Let $\overrightarrow{u_0}=\left(  u_{0},w_{0}\right)  \in H^{1}%
(\mathbb{T)}\times H^{1}(\mathbb{T)}$. Then there exists a unique
solution $\overrightarrow{u}=\left( u,w\right) $ to the problem
(\ref{1.1}) with $\sigma_{\alpha}=\sigma_{\beta}=\sigma_{\mu}=1$
and $\mu=0$ satisfying
\[
\left(  u,w\right)  \in C(\mathbb{R};H^{1}(\mathbb{T)\times}H^{1}%
(\mathbb{T))}\text{.}%
\]
\end{theorem}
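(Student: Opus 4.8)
The plan is to upgrade the local theory of Theorem~\ref{teo1.7} to a global one by means of the conserved quantities $I_1$ and $I_2$ from \eqref{1.9}--\eqref{1.10}, following the classical energy-method globalization. First I would check that the hypotheses of Theorem~\ref{teo1.7} are met at $s=1$: since $\sigma_\beta=1$ and $\mu=0$ we have $\beta+\mu=\beta=\beta\sigma_\beta$, so the local theorem produces, for data in $H^1(\mathbb{T})\times H^1(\mathbb{T})$, a unique solution on an interval $[-T,T]$ with $T=T(\|\overrightarrow{u_0}\|_{H^1})$ depending only on the $H^1$-size of the data. The entire argument then reduces to showing that the $H^1$-norm of the solution cannot grow: once an a priori bound $\sup_t(\|u(t)\|_{H^1}^2+\|w(t)\|_{H^1}^2)\le M(\|\overrightarrow{u_0}\|_{H^1})$ is secured, the local existence time stays bounded below along the flow and the solution can be reopened from any time with a uniform step, covering all of $\mathbb{R}$.

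The heart of the matter is the a priori bound, extracted from the two invariants. From $I_1$ in \eqref{1.9} the $L^2$ norms are frozen, $\|u(t)\|_{L^2}^2+\|w(t)\|_{L^2}^2=\|u_0\|_{L^2}^2+\|w_0\|_{L^2}^2$ for all $t$; in particular the constant $c_0=\beta\|\overrightarrow{u_0}\|_{L^2}^2$ entering the definition of $Y_s$ is unchanged at each reopening, so the iteration is consistent. Setting $\mu=0$ in \eqref{1.10} and solving for the $H^1$ seminorm gives, for $\gamma\neq0$, the identity
\[
\frac{3}{2}\gamma\big(\|u_x\|_{L^2}^2+\|w_x\|_{L^2}^2\big)=I_2(u_0,w_0)-i(\beta q-3\gamma\alpha)\!\int_{\mathbb{T}}(u\overline{u}_x+w\overline{w}_x)\,dx-\frac{\beta}{2}\!\int_{\mathbb{T}}(|u|^4+|w|^4)\,dx-\beta\!\int_{\mathbb{T}}|u|^2|w|^2\,dx.
\]
Taking absolute values, I would estimate each term on the right. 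The first integral is controlled by Cauchy--Schwarz and Young, $|\int u\overline{u}_x|\le\varepsilon\|u_x\|_{L^2}^2+C_\varepsilon\|u\|_{L^2}^2$, and similarly for the $w$-term. For the quartic terms I would invoke the one-dimensional Gagliardo--Nirenberg inequality on the torus, $\|u\|_{L^4}^4\le C\|u\|_{L^2}^3\|u_x\|_{L^2}+C\|u\|_{L^2}^4$, together with $\int|u|^2|w|^2\le\frac12(\|u\|_{L^4}^4+\|w\|_{L^4}^4)$; after Young these again produce a term $\varepsilon(\|u_x\|_{L^2}^2+\|w_x\|_{L^2}^2)$ plus a constant depending only on the conserved $L^2$ norms. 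Absorbing the $\varepsilon$-terms into $\frac{3}{2}|\gamma|(\|u_x\|_{L^2}^2+\|w_x\|_{L^2}^2)$ by choosing $\varepsilon$ small relative to $|\gamma|$ yields a bound on $\|u_x\|_{L^2}^2+\|w_x\|_{L^2}^2$ in terms of $|I_2(u_0,w_0)|$, $\|\overrightarrow{u_0}\|_{L^2}$ and the parameters, hence, combined with the $L^2$ bound, a uniform-in-time $H^1$ bound.

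The main obstacle is precisely that $I_2$ is not a positive-definite (coercive) energy: alongside the good $H^1$-seminorm it carries a first-order term and two quartic terms whose signs depend on $\beta$, $\alpha$, $q$, $\gamma$, and there is no a priori reason for the total to dominate $\|u_x\|^2+\|w_x\|^2$. What rescues the argument is the \emph{subcriticality} of the cubic nonlinearity in one space dimension: the quartic terms scale below the $H^1$ energy, so Gagliardo--Nirenberg lets them be dominated by an arbitrarily small multiple of the $H^1$ seminorm at the cost of a power of the conserved $L^2$ mass. A minor technical point to settle is that $I_1$ and $I_2$ are, at this regularity, only formally conserved; I would justify their invariance for genuine $H^1$ solutions by approximating the data by smooth data, using the Lipschitz dependence in Theorem~\ref{teo1.7} to pass to the limit.

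With the uniform $H^1$ bound in hand, the globalization is routine. Because the local time $T$ furnished by Theorem~\ref{teo1.7} depends only on the $H^1$-norm of the data, and that norm is bounded by $M$ for all $t$, the solution can be extended step by step with a fixed time increment over any finite interval, and the $L^2$ conservation keeps $c_0$ fixed throughout. Uniqueness and the persistence $\overrightarrow{u}\in C(\mathbb{R};H^1(\mathbb{T})\times H^1(\mathbb{T}))$ then follow from the corresponding local statements by the standard continuation argument.
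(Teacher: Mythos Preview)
Your proposal is correct and follows the same strategy as the paper: combine the local result of Theorem~\ref{teo1.7} (whose compatibility condition $(\beta+\mu)=\beta\sigma_\beta$ you correctly verify under $\mu=0$, $\sigma_\beta=1$) with the conserved quantities $I_1$ and $I_2$ to obtain a uniform $H^1$ bound, then iterate. The paper is in fact much terser than you are---after deriving $I_1$ and $I_2$ in Lemmas~4.1 and~4.2 it simply asserts that the global result follows from the local theorem and the two conservation laws, without spelling out the Gagliardo--Nirenberg / absorption argument or the approximation step needed to justify conservation at $H^1$ regularity; your write-up supplies these details.
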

This work is organized as follows. In second section we will list
a series of estimates in the spaces defined on Definition 1.4 that
will be needed in the proof of Theorem \ref{teo1.7}.
 In the third section we establish local well-posedness for the periodic initial value problem
associated to \eqref{1.1} for data in $H^{s}(\mathbb{T})\times
H^s(\mathbb{T})$, $s\geq\frac{1}{2}$ and the fourth section is
dedicated to global result. We finish the paper with some comments
about future work.

\section{Preliminary estimates}

To prove our periodic results we use the spaces introduced by Bourgain  in
\cite{4}, the contraction principle and also the properties of the
solutions to the linear problem
\begin{equation}\label{1.7}
\begin{cases}
\partial_{t}u-i\frac{q}{2}\partial_{x}^{2}u+\partial_{x}^{3}u+c_{0}\partial_{x}u=0,
\;\;x\in\mathbb{T}, \;t>0,\\
\partial_{t}w-i\frac{q}{2}\partial_{x}^{2}w+\partial_{x}^{3}w+c_{0}\partial_{x}w=0,\\
u(x,0)=u_0(x)\;\;\text{and}\;\;w(x,0)=w_0(x).
\end{cases}
\end{equation}
This linear system differs from the one used in \cite{4a} because
of the terms containing $c_{0}\partial_{x}u$ and
$c_{0}\partial_{x}w$, where the constant $c_{0}=\left(
\beta+\mu\right) [\| u\| _{L_{x}^{2}}^{2}+\| w\|
_{L_{x}^{2}}^{2}]$.

\begin{remark}
The problem (2.1) is a particular case of (1.1) with $\gamma=2$.
So, we assume  $\gamma=2$ without loss of generality because if
$\gamma\neq2$, it is enough to consider the change of variable
$v_{1} (x,t)=u(\theta x,t)$ and $v_{2}(x,t)=w(\theta x,t)$, where
$\theta=\sqrt[3] {\frac{\gamma}{2}}$, obtaining
$u(x,t)=v_{1}(\theta^{-1}x,t)$, $w(x,t)=v_{2} (\theta^{-1}x,t)$,
$\frac{\gamma}{2}\partial_{x}^{3}u=\partial_{x}^{3}v_{1}$ and
$\frac{\gamma}{2}\partial_{x}^{3}w=\partial_{x}^{3}v_{2}$.

We note that $\left\Vert u(t)\right\Vert _{L_{x}^{2}}
^{2}+\left\Vert w(t)\right\Vert _{L_{x}^{2}}^{2}$ is a conserved
quantity, see (1.4). Therefore the constant $c_{0}$ is independent
of $t$. In what follows  this constant is used as the number $c_0$
in the Definition 1.4. It plays important role to get the bounds
we need.

\end{remark}


The solution of \eqref{1.7} is given by the unitary group
$\left\{  W_{p}(t)\right\}  _{t\in\mathbb{R}}$ in $H^{s}(\mathbb{T)\times}H^{s}(\mathbb{T)}$,
defined as
\begin{equation}\label{1.8}
\overrightarrow{u}(x,t)=W_{p}(t)\overrightarrow{u_0}=(S_{p}(t)u_{0},S_{p}(t)w_{0}),
\end{equation}
where, the subscript $p$ only means "periodic", and
\begin{equation*}
S_{p}(t)u_{0}=\sum\limits_{n\in\mathbb{Z}}e^{inx}e^{it\phi(n)}\widehat{u}_{0}\left(  n\right),
\end{equation*}
and $\phi(n)=n^{3}-\frac{q}{2}n^{2}-c_{0}n$.

Let $q_{\pm}(n,\tau)=\tau-n^{3}\pm\frac{q}{2}n^{2}+c_{0}n$, then
we obtain the following equalities,
\begin{align}\label{4.4}
\langle q_{-}(n,\tau)\rangle^{b}| \widetilde{\overline{u}}%
(n,\tau)| =\langle q_{+}(-n,-\tau)\rangle^{b}|
\widetilde{u}(-n,-\tau)| \text{,}
\\
\| f\| _{(1,s,b)}=\| \langle n\rangle^{s}\langle
q_{+}(n,\tau)\rangle^{b}\widetilde{f}(n,\tau)\| _{l_{n}^{2}L_{\tau
}^{2}}\text{.}
\end{align}
The main linear estimates are the following.

\begin{lemma}\label{2.1}
For $s\in\mathbb{R}$ we have
\[
\| \Psi(t)W_{p}(t)\overrightarrow{u_0}\| _{(1,s,\frac12)}\leq c\| \overrightarrow{u_0}\| _{H^{s}
\times H^{s}
}\text{,}
\]
\[
\| \Psi(t)W_{p}(t)\overrightarrow{u_0}\|
_{(2,s)}\leq
c\| \overrightarrow{u_0}\| _{H^{s}\times H^{s}}\text{.}%
\]
Therefore
\begin{equation}\label{4.6}
\| \Psi(t)W_{p}(t)\overrightarrow{u_0}\|
_{Y_{s}\times
Y_{s}}\leq c\| \overrightarrow{u_0}\| _{H^{s}\times H^{s}%
}\text{,}
\end{equation}

where $\Psi(t)W_{p}(t)\overrightarrow{u_0}$ is given by
\[
\Psi(t)W_{p}(t)\overrightarrow{u_0}=(\psi(t)S_{p}(t)u_{0},\psi
(t)S_{p}(t)w_{0})
\]
and $\psi$ connotes a cut-off function satisfying $\psi=1$ in $[-1,1]$,
$\psi\in C_{0}^{\infty}$ and
$\mathit{supp}\,\psi\subseteq(-2,2)$.
\end{lemma}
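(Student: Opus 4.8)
The plan is to reduce everything to a single scalar estimate and then exploit that the weight defining the $X_{s,b}$ and $Z_{s,b}$ norms is built precisely from the dispersion relation of the group $W_p(t)$. Since $W_p(t)$ acts diagonally, $\Psi(t)W_p(t)\overrightarrow{u_0}=(\psi(t)S_p(t)u_0,\psi(t)S_p(t)w_0)$, the vector norm $\|\cdot\|_{Y_s\times Y_s}$ is just the sum of the two component $Y_s$-norms, so it suffices to prove $\|\psi(t)S_p(t)g\|_{(1,s,\frac12)}+\|\psi(t)S_p(t)g\|_{(2,s,0)}\le c\|g\|_{H^s}$ for a scalar datum $g$, where $(2,s,0)$ is the $Z_{s,0}$-norm appearing in the statement.

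First I would compute the space-time Fourier transform of $\psi(t)S_p(t)g$. Writing $S_p(t)g=\sum_n e^{inx}e^{it\phi(n)}\widehat g(n)$ with $\phi(n)=n^3-\frac q2 n^2-c_0 n$, the $x$-transform isolates the coefficient $\widehat g(n)$ and the $t$-transform of $\psi(t)e^{it\phi(n)}$ is $\widehat\psi(\tau-\phi(n))$, so that $\widetilde{\psi S_p g}(n,\tau)=\widehat g(n)\,\widehat\psi(\tau-\phi(n))$. The crucial observation is that the weight appearing in both norms, $\langle \tau-n^3+\frac q2 n^2+c_0 n\rangle=\langle\tau-\phi(n)\rangle$, is exactly the symbol of the operator $\partial_t-i\frac q2\partial_x^2+\partial_x^3+c_0\partial_x$ the space is associated to; in particular all dependence on $c_0$ is absorbed into $\phi$.

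For the $X_{s,1/2}$ estimate I would substitute this formula into $\|\cdot\|_{(1,s,\frac12)}$ and perform the change of variables $\sigma=\tau-\phi(n)$ in the $\tau$-integral. This decouples the $n$-sum from the $\tau$-integral: the result factors as $\big(\sum_n\langle n\rangle^{2s}|\widehat g(n)|^2\big)^{1/2}\big(\int_{\mathbb R}\langle\sigma\rangle|\widehat\psi(\sigma)|^2\,d\sigma\big)^{1/2}$. The first factor is $\|g\|_{H^s}$ up to the $2\pi$ normalization, and the second is finite because $\psi\in C_0^\infty$ forces $\widehat\psi$ to be Schwartz, hence in $H^{1/2}(\mathbb R)$. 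For the $Z_{s,0}$ estimate, where the weight carries exponent $b=0$, the $L^1_\tau$-norm of $\widehat\psi(\tau-\phi(n))$ equals $\|\widehat\psi\|_{L^1}$ independently of $n$ by translation invariance, so $\|\psi S_p g\|_{(2,s,0)}=\|\widehat\psi\|_{L^1}\|g\|_{H^s}$, again finite since $\widehat\psi$ is Schwartz. Summing the $(1,s,\frac12)$ and $(2,s,0)$ bounds gives the $Y_s$ estimate, and adding the contributions of the two components yields \eqref{4.6}.

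There is no genuine obstacle here: the estimate is structural and reduces to the boundedness of $\psi$ in the relevant temporal norms. The only point requiring care is the first step, namely verifying that the phase $\phi(n)$ of the group matches the symbol in the $\langle\cdot\rangle^{b}$ weight so that the change of variables $\sigma=\tau-\phi(n)$ cleanly separates the two variables; once this is in place the constant $c$ depends only on $\psi$ (through $\|\langle\sigma\rangle^{1/2}\widehat\psi\|_{L^2}$ and $\|\widehat\psi\|_{L^1}$) and not on $s$ or $c_0$.
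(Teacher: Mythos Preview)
Your proposal is correct and follows essentially the same approach as the paper: reduce to a single component, compute $\widetilde{\psi S_p g}(n,\tau)=\widehat g(n)\,\widehat\psi(\tau-\phi(n))$, and use the change of variable $\sigma=\tau-\phi(n)$ (equivalently the paper's $q_+(n,\tau)$) to factor the norm into $\|g\|_{H^s}$ times a quantity depending only on $\psi$. The paper records the same two displays you outline, bounding the $(1,s,\tfrac12)$ norm by $\|u_0\|_{H^s}\|\psi\|_{H^{1/2}}$ and the $(2,s,0)$ norm by $\|u_0\|_{H^s}\|\widehat\psi\|_{L^1}$.
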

\begin{proof}
Taking in account the Definition 1.4, to obtain (\ref{4.6}) it is
enough to estimate
\begin{equation}\label{4.8}
\begin{split}
\| \psi(t)S_{p}(t)u_{0}\| _{(1,s,\frac12
)}^{2}  &  =\| \langle n\rangle^{s}\langle
q_{+}(n,\tau)\rangle
^{\frac{1}{2}}\widehat{\psi}(q_{+}(n,\tau))\widehat{u}_{0}(n)\|
_{l_{n}^{2}L_{\tau}^{2}}^{2}\\
&  =\sum\limits_{n\in\mathbb{Z}}\langle n\rangle^{2s}| \widehat
{u}_{0}(n)| ^{2}\int_{\mathbb{R}}\langle q_{+}(n,\tau)\rangle
^{1}| \widehat{\psi}(q_{+}(n,\tau))| ^{2}d\tau\\
&  \leq c\| u_{0}\| _{H^{s}}^{2}\|
\psi\| _{H^{1/2}}^{2}\leq c\| u_{0}\|
_{H^{s}}^{2}
\end{split}
\end{equation}
and
\begin{equation}\label{4.9}
\begin{split}
\| \psi(t)S_{p}(t)u_{0}\| _{(2,s,0)}^{2}  &
=\sum\limits_{n\in\mathbb{Z}}\langle n\rangle^{2s}| \widehat
{u}_{0}(n)| ^{2}\left( \int_{\mathbb{R}}|
\widehat{\psi}(q_{+}(n,\tau))
| d\tau\right)  ^{2}\\
& \leq c\| u_{0}\| _{H^s}^2.
\end{split}
\end{equation}
Details of the computations are found in \cite{27}.
\end{proof}


\section{Proof of Theorem 1.6}

The result in this section requires a new set of computations, so
we will present it in more detailed setting. To simplify the notation we write (\ref{1.1}) as
\begin{equation}\label{4.1}
\left\{
\begin{array}
[c]{l}%
\partial_{t}\overrightarrow{u}-\frac{q}{2}i\partial_{x}^{2}\overrightarrow
{u}+\partial_{x}^{3}\overrightarrow{u}+{\bf{c}}_{0}\partial_{x}\overrightarrow
{u}= G(\overrightarrow{u}),\\
\overrightarrow{u}(x,0)=\overrightarrow{u_0}\in
H^{s}(\mathbb{T})\times H^{s}(\mathbb{T}),
\end{array}
\right.
\end{equation}
where
\[
G(\overrightarrow{u})=\left(G_{1}(u,w),G_{1}(w,u)\right),
\]
with
\begin{align}\label{4.2}
 G_{1}(u,w)  &  =\left(  \beta+\mu\right)  \left [| u| ^{2}-\|
u\| _{L_{x}^{2}}^{2}-\| w\| _{L_{x}^{2}}^{2}\right ]\partial_{x}u
+\beta\sigma_{\beta} | w|
^{2}\partial_{x}u\\
&  \mu u^{2}\partial_{x}\overline{u}+\mu\sigma_{\mu}u\partial_{x}(|
w| ^{2})-i\alpha u(| u|
^{2}+\sigma_{\alpha }| w|
^{2})\text{.}\nonumber
\end{align}

The integral equation associated to (\ref{4.1}) is
\begin{equation}\label{4.3}
\Phi_{\overrightarrow{u_0}}(\overrightarrow{u})=W_{p}(t)\overrightarrow
{u}_{0}-\int\nolimits_{0}^{t}W_{p}(t-t^{\prime})G(\overrightarrow
{u})(t^{\prime})dt^{\prime}\text{,}
\end{equation}
where $W_{p}(t)$ is defined in (\ref{1.8}).

Considering the cut-off function $\psi\in C_{0}^{\infty}$ defined on Lemma \ref{2.1} we
have the following estimate.

\begin{lemma}
For $s\in\mathbb{R}$ we have

\begin{equation}\label{4.7}
\begin{split}
&\| \Psi(t)\int_{0}^{t}\!W_{p}(t-t^{\prime})G(\overrightarrow{u})(t')dt'\|_{Y_{s}\times Y_{s}}
  \leq c\| G_{1}(u,w)\| _{(1,s,-\frac12)}\!\\
 &+\!c\| G_{1}(w,u)\| _{(1,s,-\frac12)}
+c\left(  \sum\limits_{n\in\mathbb{Z}}\langle n\rangle^{2s}\left(
\int_{-\infty}^{+\infty}\frac{\widetilde{G_{1}(u,w)}(n,\tau)}{\langle
q_{+}(n,\tau)\rangle}d\tau\right)  ^{2}\right)  ^{\frac{1}{2}}.
\end{split}
\end{equation}
\end{lemma}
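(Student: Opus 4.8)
The plan is to estimate the Duhamel term $\Psi(t)\int_0^t W_p(t-t')G(\overrightarrow{u})(t')\,dt'$ componentwise, reducing the vector estimate in $Y_s\times Y_s$ to the corresponding scalar estimates for each coordinate $G_1(u,w)$ and $G_1(w,u)$. Since $W_p(t)$ acts diagonally by \eqref{1.8}, it suffices to bound the scalar quantity $\|\Psi(t)\int_0^t S_p(t-t')G_1(t')\,dt'\|_{Y_s}$ by the three terms appearing on the right of \eqref{4.7}. Recalling $Y_s=X_{s,1/2}\cap Z_{s,0}$, I would first split the $Y_s$-norm into its two constituent pieces, the $(1,s,\tfrac12)$ piece and the $(2,s,0)$ piece, and treat each separately.

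For the $X_{s,1/2}$ piece, the strategy is the standard Bourgain-space Duhamel estimate. First I would replace the sharp time cutoff $\int_0^t$ by the smooth cutoff $\psi(t)$ and expand on the Fourier side: writing $h=G_1(u,w)$, the Fourier transform of $\Psi(t)\int_0^t S_p(t-t')h(t')\,dt'$ can be written in terms of $\widetilde{h}(n,\tau)/\langle q_+(n,\tau)\rangle$ together with contributions coming from the cutoff $\widehat{\psi}$. The key mechanism is the gain of one full power of $\langle q_+(n,\tau)\rangle$ in the denominator from the time integration, which converts the negative-regularity norm $\|h\|_{(1,s,-1/2)}$ into the positive $\|\cdot\|_{(1,s,1/2)}$ bound one needs. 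Separating the region $|q_+(n,\tau)|\gtrsim 1$ from the resonant region $|q_+(n,\tau)|\lesssim 1$ is where the third, boundary-type term $\bigl(\sum_n\langle n\rangle^{2s}(\int \widetilde{h}(n,\tau)/\langle q_+(n,\tau)\rangle\,d\tau)^2\bigr)^{1/2}$ arises, since near $q_+=0$ the denominator no longer gives decay and one is left with the $L^1_\tau$-type average of $\widetilde{h}$ weighted by $1/\langle q_+\rangle$.

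For the $Z_{s,0}$ piece, which involves the $l^2_n L^1_\tau$ norm at $b=0$, the $L^1_\tau$ integration forces me to control $\int \langle q_+(n,\tau)\rangle^{0}\,|\cdot|\,d\tau$, and here again the factor $1/\langle q_+(n,\tau)\rangle$ from the Duhamel formula is precisely what makes the $\tau$-integral converge, producing the same third term in \eqref{4.7} and a contribution dominated by $\|h\|_{(1,s,-1/2)}$ after Cauchy--Schwarz in $\tau$. In both pieces the computation mirrors the scalar estimate in \cite{27} and in \cite{38}, so I would invoke those references for the routine parts rather than reproduce them. Finally, I would sum the two components, using that $G(\overrightarrow{u})=(G_1(u,w),G_1(w,u))$ and that the $Y_s\times Y_s$ norm is the sum of the two $Y_s$ norms, to obtain \eqref{4.7}.

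The main obstacle I anticipate is the careful bookkeeping around the resonant region $|q_+(n,\tau)|\lesssim 1$: isolating exactly the third term as the genuinely unavoidable contribution, and checking that everywhere else the extra power of $\langle q_+\rangle^{-1}$ suffices, is the delicate point. A secondary technical issue is handling the shift $c_0 n$ inside $q_+(n,\tau)=\tau-n^3+\tfrac{q}{2}n^2+c_0 n$; since $c_0$ is a fixed real constant it merely translates the modulation variable and does not affect the size estimates, but one must verify that the cutoff and the phase $\phi(n)$ are compatible with this shift, which is immediate from the definition of $W_p(t)$.
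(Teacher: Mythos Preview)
Your proposal is correct and follows essentially the same approach as the paper: reduce to the scalar Duhamel estimate for each component, split the $Y_s$ norm into its $(1,s,\tfrac12)$ and $(2,s,0)$ parts, and separate the resonant region $|q_+(n,\tau)|\lesssim 1$ (handled in the paper via a second cutoff $\varphi$ and a Taylor expansion, yielding the $J_1$ piece) from the non-resonant region (the $J_2^1, J_2^2$ pieces), with the boundary-type third term arising exactly where you indicate. The paper carries out these steps explicitly rather than citing \cite{27} and \cite{38} for all of them, but the underlying strategy is the one you describe.
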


\begin{proof}

To obtain (\ref{4.7}) it is sufficient to estimate
\begin{equation}\label{4.10}
\begin{split}
\|\psi(t)\int\nolimits_{0}^{t}S_{p}(t-t^{\prime})&G_{1}(u,w)(t')dt'\| _{Y_{s}}
\le c\| G_{1}(u,w)\| _{(1,s,-\frac12)}\\
&+ c\left(\sum\limits_{n\in \mathbb{Z}}\langle n\rangle^{2s}\left(
\int_{-\infty}^{+\infty}
\frac{\widetilde{G_{1}(u,w)}(n,\tau)}{\langle
q_{+}(n,\tau)\rangle} d\tau\right)^{2}\right)^{\frac{1}{2}}.
\end{split}
\end{equation}
From the relation $\int_{0}^{t}h\left(  t^{\prime}\right)  dt^{\prime}%
=\int_{-\infty}^{+\infty}\frac{e^{it\lambda}-1}{i\lambda}\widehat{h}\left(
\lambda\right)  d\lambda$ and $G_{1}(x,t)=G_{1}(u,w)(x,t)$, we obtain
\begin{equation}\label{4.11}
\begin{split}
\psi(t)\int\nolimits_{0}^{t}&S_{p}(t-t^{\prime})G_{1}(x,t^{\prime})dt^{\prime}
=\\
&\psi(t)\sum\limits_{n\in\mathbb{Z}}e^{inx}\int_{-\infty}^{+\infty}\left(
\frac{e^{it\left(  q_{+}(n,\lambda)\right)
}-1}{i(q_{+}(n,\lambda))}\right)
e^{it\left(  n^{3}-\frac{q}{2}n^{2}-c_{0}n\right)  }\widetilde{G}%
_{1}(n,\lambda)d\lambda.
\end{split}
\end{equation}
Let $\varphi\in C_{0}^{\infty}$ be another function such that
$\varphi\equiv1$ in $[-B,B]$, $\mathit{supp}$
$\varphi\subseteq(-2B,2B),$  where
$B<\frac{1}{100}$, say. Using $\varphi$, we can write \eqref{4.11} as
\begin{equation*}
\begin{split}
&\psi(t)\sum\limits_{n\in\mathbb{Z}}e^{inx}\int_{-\infty}^{+\infty}
\left(\frac{e^{it\left(  q_{+}(n,\lambda)\right)}-1}{i(q_{+}(n,\lambda))}\right)
e^{it\phi(n)  }\varphi(q_{+}(n,\lambda))\widetilde{G}_{1}(n,\lambda)d\lambda\\
&
+\psi(t)\sum\limits_{n\in\mathbb{Z}}e^{inx}\int_{-\infty}^{+\infty}\left(
\frac{e^{it\left(  q_{+}(n,\lambda)\right)
}-1}{i(q_{+}(n,\lambda))}\right) e^{it\phi(n) }\left(  1-\varphi
(q_{+}(n,\lambda))\right)  \widetilde{G}_{1}(n,\lambda)d\lambda\\
&  =J_{1}(x,t)+J_{2}(x,t).
\end{split}
\end{equation*}
That is,
\begin{equation*}
\begin{split}
J_1(x,t) =&\psi(t)\sum\limits_{n\in\mathbb{Z}}e^{inx}e^{it\phi(n)}\times\\
&\int_{-\infty}^{+\infty}\sum\limits_{k\geq1}\left(\frac{(q_{+}(n,\lambda))^{k-1}
e^{it\left(q_{+}(n,\lambda)\right)  }}{k!}i^{k-1}t^{k}\right)
\varphi(q_{+}(n,\lambda))\widetilde{G}_{1}(n,\lambda) d\lambda
\end{split}
\end{equation*}
\begin{equation}\label{4.12}
\begin{split}
&J_{2}(x,t) =\psi(t)\sum\limits_{n\in\mathbb{Z}}e^{inx}e^{it\phi(n)}
\int_{-\infty}^{+\infty}\left( \frac{e^{it\left(
q_{+}(n,\lambda)\right)  }}{i(q_{+}(n,\lambda))}\right) \left(
1-\varphi(q_{+}(n,\lambda))\right)  \widetilde{G}_{1}(n,\lambda
)d\lambda\\
&  -\psi(t)\sum\limits_{n\in\mathbb{Z}}e^{inx}e^{it\phi(n)}\int_{-\infty}^{+\infty}
\left(\frac{1-\varphi(q_{+}(n,\lambda)}{i(q_{+}(n,\lambda))}\right)
\widetilde{G}_1(n,\lambda)d\lambda\\
&=J_{2}^{1}(x,t)+J_{2}^{2}(x,t).
\end{split}
\end{equation}
Therefore, $\| \psi(t)\int\nolimits_{0}^{t}S_{p}(t-t^{\prime}%
)G_{1}(u,w)(t^{\prime})dt^{\prime}\|
_{Y_{s}}\leq\| J_{1}\| _{Y_{s}}+\|
J_{2}^{1}\| _{Y_{s}}+\| J_{2}^{2}\|
_{Y_{s}}$.\newline Now, we compute the norms of $ J_{1}, J_{2}^{1}$ and
$J_{2}^{2}$
\begin{eqnarray}\label{4.13}
\| J_{1}\| _{(1,s,\frac12)}  &=& \| \langle n\rangle^{s}\langle
q_{+}(n,\tau)\rangle^{\frac
{1}{2}}\widetilde{J_{1}}(n,\tau)\| _{l_{n}^{2}L_{\tau}^{2}}\nonumber \\
&=& \| \langle n\rangle^{s}\langle q_{+}(n,\tau)\rangle^{\frac{1}{2}%
}\sum\limits_{k\geq1}\frac{h_{k}(n)}{k!}\widehat{\psi_{k}}(q_{+}%
(n,\tau))\| _{l_{n}^{2}L_{\tau}^{2}}\text{,}
\end{eqnarray}
where
\[
h_{k}(n)=\int_{-\infty}^{+\infty}i^{k-1}(q_{+}(n,\lambda))^{k-1}\varphi
(q_{+}(n,\lambda))\widetilde{G}_{1}(n,\lambda)d\lambda
\]
and
\[
\widehat{\psi_{k}}(q_{+}(n,\tau))=\int_{-\infty}^{+\infty}e^{-it\left(
q_{+}(n,\tau)\right)  }\psi(t)t^{k}dt\text{.}
\]
Using the properties of $\varphi$ we estimate
\begin{align}\label{4.15}
|h_{k}(n)|\leq c\| \langle q_{+}(n,\lambda)\rangle^{-\frac{1}{2}}
\widetilde{G}_{1}(n,\lambda)\|_{L_{\lambda}^{2}} =L(n).
\end{align}
After an integration by parts and using properties of $\psi$, we have

\begin{align}\label{4.15b}
|h_{k}(n)| \leq c\frac{k^{2}+k+1}{| q_{+}(n,\tau)|^{2}}.
\end{align}
On the other hand, we have
\begin{equation}\label{4.16}
| \widehat{\psi_{k}}(q_{+}(n,\tau))|
\leq\|t^{k}\psi\| _{L^{1}}\leq c.
\end{equation}
It follows from (\ref{4.15})-(\ref{4.16}) that
\begin{equation}\label{4.17}
| \widehat{\psi_{k}}(q_{+}(n,\tau))|(1+|  q_{+}(n,\tau)|^{2}) \leq c(k^{2}%
+k+1)\text{.}
\end{equation}
From (\ref{4.13})-(\ref{4.17}) we obtain
\begin{equation}\label{4.18}
\begin{split}
\| J_{1}\| _{(1,s,\frac12)}
&\leq\Big(\sum_{n\in\mathbb{Z}}\langle n\rangle^{2s}\int_{-\infty}^{+\infty}
\langle q_{+}(n,\tau)\rangle\Big(\sum\limits_{k\geq1}\frac{|h_{k}(n)| }{k!}
|\widehat{\psi_{k}}(q_{+}(n,\tau))|\Big)^{2}d\tau\Big)^{\frac{1}{2}}\\
& \leq c\Big(  \sum_{n\in\mathbb{Z}}\langle n\rangle^{2s}
|L(n)| ^{2}\int_{-\infty}^{+\infty}\frac{1}{\langle q_{+}(n,\tau)\rangle^{3}}
\Big(\sum\limits_{k\geq1}\frac{k^{2}+k+1}{k!}\Big)^{2}d\tau\Big)^{\frac{1}{2}}\\
& \leq c\Big(\sum_{n\in\mathbb{Z}}\langle
n\rangle^{2s}|L(n)|^{2}\Big)^{\frac{1}{2}} =c\|G_{1}(u,w)\|
_{(1,s,-\frac12)}.
\end{split}
\end{equation}
The following estimate follows from Young's inequality and
$|\frac{1-\varphi(x)}{x}| \leq c_{1}(B)\langle
x\rangle^{-\frac12}$.
\begin{equation}\label{4.19}
\begin{split}
\|J_{2}^{1}&\|_{(1,s,\frac12)} =\| \langle n\rangle^{s}\langle
q_{+}(n,\tau)\rangle^{\frac{1}{2}}
\Big(\widehat{\psi}(\cdot)\ast\frac{(1-\varphi(q_{+}(n,\cdot )))
\widetilde{G}_{1}(n,\cdot)}{q_{+}(n,\cdot)}\Big)  (\tau
)\| _{l_{n}^{2}L_{\tau}^{2}}\\
&\leq\Big(  \sum_{n\in\mathbb{Z}}\langle n\rangle^{2s} \|\langle
q_{+}(n,\tau)\rangle\widehat{\psi}(\tau)\| _{L_{\tau}^{1}}^{2}
\|\frac{\left(1-\varphi(q_{+}(n,\tau))\right) \widetilde
{G}_{1}(n,\tau)}{q_{+}(n,\tau)}\|
_{L_{\tau}^{2}}^{2}\Big)^{\frac{1}{2}}\\
&\leq c\Big(  \sum_{n\in\mathbb{Z}}\langle n\rangle^{2s}\|
\frac{\left( 1-\varphi(q_{+}(n,\tau))\right)
\widetilde{G}_{1}(n,\tau
)}{q_{+}(n,\tau)}\| _{L_{\tau}^{2}}^{2}\Big)^{\frac12}\\
&\leq c\Big(\sum_{n\in\mathbb{Z}}\langle n\rangle^{2s} \|\langle
q_{+}(n,\lambda)\rangle^{-\frac{1}{2}}\widetilde{G}_{1}(n,\tau)
\|_{L_{\tau}^2}^2\Big)^{\frac12}.
\end{split}
\end{equation}

To estimate
$\| J_{2}^{2}\| _{(1,s, \frac12 )}$
note that
\begin{equation*}
J_{2}^{2} =-\psi(t)\sum\limits_{n\in\mathbb{Z}}e^{inx}e^{it\left(
n^{3}-\frac{q}{2}n^{2}-c_{0}n\right)
}\widehat{r}(n)=-\psi(t)S(t)r(x)\text{,}
\end{equation*}
where
\begin{equation*}
\widehat{r}(n)=\int_{-\infty}^{+\infty}\left(
\frac{1-\varphi(q_{+}(n,\lambda)}{i(q_{+}(n,\lambda))}\right)  \widetilde{G}_{1}.
\end{equation*}
Then (\ref{4.8}) and the inequality $| \dfrac{1-\varphi(x)}{x}|
\leq\dfrac{c(B)}{\langle x\rangle^{1}}$ imply
\begin{equation}\label{4.20}
\begin{split}
\| J_{2}^{2}\| _{(1,s,\frac12)}
 &  =\| \psi(t)S_{p}(t)r)\| _{(1,s,\frac12)}\leq\| r\| _{H^{s}}\\
&  \leq c\Big(  \sum\limits_{n\in\mathbb{Z}}\langle n\rangle^{2s}\Big|
\int_{-\infty}^{+\infty}\left(  \frac{1-\varphi(q_{+}(n,\lambda))}{i(q_{+}(n,\lambda))}\right)
\widetilde{G}_{1}(n,\lambda)d\lambda\Big|^{2}\Big)^{\frac{1}{2}}\\
& \leq c\Big(  \sum\limits_{n\in\mathbb{Z}}\langle n\rangle^{2s}\Big|
\int_{-\infty}^{+\infty}\frac{\widetilde{G_{1}}(n,\lambda)}{\langle
q_{+}(n,\lambda)\rangle}d\lambda\Big| ^{2}\Big)^{\frac{1}{2}}.
\end{split}
\end{equation}
Using the definition of $h_{k}(n)$ it is straightforward to obtain
\[
\| h_{k}(n)\| \leq\| \mathcal{\chi}_{[-1,1]}%
(q_{+}(n,\lambda))\widetilde{G_{1}}(n,\lambda)\| _{L_{\lambda}^{1}%
}\leq c\Big\| \frac{\widetilde{G_{1}}(n,\lambda)}{\langle q_{+}%
(n,\lambda)\rangle}\Big\| _{L_{\lambda}^{1}}.
\]
Then,
\begin{equation}\label{4.21}
\begin{split}
\| J_{1}\| _{(2,s,0)}  &  =\| \langle
n\rangle ^{s}\int_{-\infty}^{+\infty}e^{-it\left(
q_{+}(n,\tau)\right)  }\psi
(t)\sum\limits_{k\geq1}\frac{t^{k}}{k!}h_{k}(n)dt\| _{l_{n}
^{2}L_{\tau}^{1}}\\
& \leq c\Big(\sum\limits_{n\in\mathbb{Z}}\langle
n\rangle^{2s}\Big(
\int_{-\infty}^{+\infty}\sum\limits_{k\geq1}\frac{|h_{k}(n)|}{k!}|
\widehat{\psi_{k}}(q_{+}(n,\tau))| d\tau\Big)^{2}\Big)^{\frac{1}{2}}\\
&\leq c\Big(\sum\limits_{n\in\mathbb{Z}}\langle n\rangle^{2s}
\|\frac{\widetilde{G_{1}}(n,\lambda)}{\langle q_{+}(n,\lambda)\rangle }
\| _{L_{\lambda}^{1}}^{2}\Big)^{\frac12}.
\end{split}
\end{equation}
Proceeding analogously to (\ref{4.19}) and (\ref{4.20}), we can show that
\[
\| J_{2}^{1}\| _{(2,s,0)}+\| J_{2}^{2}\| _{(2,s,0)}\leq c\left(
\sum\limits_{n\in\mathbb{Z}}\langle n\rangle ^{2s}\|
\frac{\widetilde{G_{1}}(n,\tau)}{\langle q_{+}(n,\tau)\rangle
}\| _{L_{\tau}^{1}}^{2}\right)  ^{\frac{1}{2}}\text{,}%
\]
which finishes the proof of the lemma.
\end{proof}

Let $n,n_{1},n_{2}\in\mathbb{Z}$,
$\tau,\tau_{1},\tau_{2}\in\mathbb{R}$, and set $n_3=n-n_1-n_2$ and $\tau_3=\tau-\tau_1-\tau_2$,
then
\begin{equation}\label{4.22}
\begin{split}
q_{+}(n,\tau)&-q_{+}(n_1,\tau_1)-q_{-}(n_2,\tau_2)-q_{+}
(n_3,\tau_3)\\
=&-3(n_{1}+n_{2})(n-n_{1})(n-n_{2}-\frac{q}{3}).
\end{split}
\end{equation}
Therefore,
\begin{equation*}
\max\{|q_{+}(n,\tau)|,|q_{+}(n_1,\tau_1)|,|q_{-}(n_2,\tau_2)|,|q_{+}(n_3,\tau_3)|\}
\geq\frac{3}{4}| n_{1}+n_{2}| |n-n_{1}| | n-n_{2}-\frac{q}3|.
\end{equation*}

Now, we define $M_{1}\left(n,n_{1},n_{2}\right)=M_{1}$, $L\left(n,n_{1},n_{2}\right)=L$ and
$M_{2}\left(n,n_{1},n_{2}\right)=M_{2} $ as
\[
M_{1}\left(n,n_{1},n_{2}\right)=\max\left\{  |n-n_{1}| ,| n_{1}+n_{2}|, |n-n_{2}| \right\},
\]
\[
M_{2}\left(n,n_{1},n_{2}\right)=\min\left\{|n-n_1|, | n_1+n_2|, |n-n_2|\right\},
\]
and
\begin{equation*}
L(n,n_1,n_2)=
\begin{cases}
|n-n_{1}|,\;\;\;\,  \text{if}\;\;(|n-n_1|-|n_1+n_2|)(|n-n_1|-|n-n_2|)\leq 0,\\
| n_1+n_2|,\;\; \text{if}\;\;(| n_1+n_2|-|n-n_1|)(
|n_1+n_2|-|n-n_2|)  \leq 0,\\
|n-n_2|,\;\;\;\; \text{if}\;\;(| n-n_2|-|n-n_1|)(|n-n_2|-| n_1+n_2|)\leq 0.
\end{cases}
\end{equation*}
Note that $M_2\leq L\leq M_1$. The following inequalities will be
useful to prove the next lemma. The nonlinear term $G_{1}(u,w)$
defined in (\ref{4.2}) restricted to $\left( \beta+\mu\right)
=\beta\sigma_{\beta}$, writes
\begin{align}\label{4.23}
G_{1}(u,w)  &  =\left(  \beta+\mu\right)  [| u|
^{2}-\| u\| _{L_{x}^{2}}^{2}+|
w|
^{2}-\| w\| _{L_{x}^{2}}^{2}]\partial_{x}u\\
&  +\left(  \mu
u^{2}\partial_{x}\overline{u}+\mu\sigma_{\mu}uw\partial
_{x}\overline{w}\right)
+\mu\sigma_{\mu}u\overline{w}\partial_{x}w-i\alpha u(|
u| ^{2}+\sigma_{\alpha}| w|
^{2})\nonumber\\
&=G_{11}(u,w)+G_{12}(u,w)+G_{13}(u,w)+G_{14}(u,w)\text{.}\nonumber
\end{align}
Denote by  $G_{1j}=G_{1j}(u,w)$, for $j=1,\dots, 4$. Next, we
compute $\widetilde{G_{1j}}(n,\tau)$. To do so, note that using
Parceval's identity,
\begin{equation*}
\begin{split}
\widetilde{| u| ^{2}u_{x}}(n,\tau)  =&
{\displaystyle\int\nolimits_{\mathbb{R}}} e^{-it\tau}\left(
\widehat{u}\ast\widehat{\overline{u}}\ast\widehat{u_{x} }\right)
dt ={\displaystyle\int\nolimits_{\mathbb{R}}}
e^{-it\tau}\sum\limits_{n_{1}\in\mathbb{Z}}in_{1}\widehat{u}(n_{1})\left(
\widehat{u}\ast\widehat{\overline{u}}\right)  (n-n_{1})dt \nonumber\\
=&{\displaystyle\int\nolimits_{\mathbb{R}}}
e^{-it\tau}\sum\limits_{n_{1}\neq n}\sum\limits_{n_{2}\in\mathbb{Z}}
in_{1}\widehat{u}(n_{1})\widehat{\overline{u}}(n_{2})\widehat{u}(n_3)dt +
{\displaystyle\int\nolimits_{\mathbb{R}}}
e^{-it\tau}in\widehat{u}(n)\| u\| _{L_{x}^{2}}^{2}\\
=&
{\displaystyle\int\nolimits_{\mathbb{R}}}
e^{-it\tau}\sum\limits_{n_{1}\neq n}\sum\limits_{n_{2}\neq-n_{1}}
in_{1}\widehat{u}(n_{1})\widehat{\overline{u}}(n_{2})\widehat{u}(n_3)dt\\
&+{\displaystyle\int\nolimits_{\mathbb{R}}}
e^{-it\tau}\sum\limits_{n_{1}\in\mathbb{Z}}in_{1}\widehat{u}(n_{1})\widehat{\overline{u}}(-n_{1})
\widehat{u}(n)dt-{\displaystyle\int\nolimits_{\mathbb{R}}}
e^{-it\tau}in\widehat{u}(n)\widehat{\overline{u}}(-n)\widehat{u}(n)dt\\
& +{\displaystyle\int\nolimits_{\mathbb{R}}}
e^{-it\tau}in\widehat{u}(n)\| u\| _{L_{x}^{2}}^{2}dt.
\end{split}
\end{equation*}
Therefore,
\begin{equation}\label{4.24}
\begin{split}
\widetilde{| u| ^{2}u_{x}}(n,\tau)=&\sum\limits_{(n_{1},n_{2})\in
H_{n}^{1}}
in_{1}\iint\nolimits_{\mathbb{R}^{2}}\widetilde{u}(n_{1},\tau_{1})
\widetilde{\overline{u}}(n_{2},\tau_{2})\widetilde{u}(n_3,\tau_3)d\tau_{1}d\tau_{2}\\
& +\sum\limits_{n_{1}\in\mathbb{Z}}in_{1}\iint\nolimits_{\mathbb{R}^{2}}
\widetilde{u}(n_{1},\tau_{1})\widetilde{\overline{u}}(-n_{1},\tau_{2})
\widetilde{u}(n,\tau_3)d\tau_{1}d\tau_{2}\\
& -\iint\nolimits_{\mathbb{R}^{2}}in\widetilde{u}(n,\tau_{1})\widetilde
{\overline{u}}(-n,\tau_{2})\widetilde{u}(n,\tau_3)d\tau_{1}d\tau_{2}\\
& +{\displaystyle\int\nolimits_{\mathbb{R}}}
e^{-it\tau}in\widehat{u}(n)\| u\| _{L_{x}^{2}}^{2}dt,
\end{split}
\end{equation}
where
$H_{n}^{1}=\left\{  \left(  n_{1},n_{2}\right)  \in\mathbb{Z}^{2}:n-n_{1}%
\neq0,n_{1}+n_{2}\neq0\right\}  \text{.}%
$
The term $\widetilde{|w| ^{2}u_{x}}(n,\tau)$ is computed analogously.

We write
$\widetilde{|u|^2u_{x}}=A+\widetilde{|u|_{L_{x}^{2}}^{2}u_{x}}$
and
$\widetilde{|w|^2u_{x}}=B+\widetilde{|w|_{L_{x}^{2}}^{2}u_{x}}$.
Therefore  $\widetilde{G}_{11}(n,\tau)$ will be defined as
$A(n,\tau)+B(n,\tau)$. Explicitly we have that
$\widetilde{G}_{11}(n,\tau)$:
\begin{equation}
\begin{split}
\widetilde{G}_{11}&(n,\tau)  = c_{1}(\beta,\mu)\sum\limits_{(n_{1},n_{2})\in
H_{n}^{1}}in_{1}\iint\nolimits_{\mathbb{R}^{2}}\widetilde{u}(n_{1},\tau
_{1})\widetilde{\overline{u}}(n_{2},\tau_{2})\widetilde{u}(n_3,\tau_3)d\tau_{1}d\tau_{2}\\
& +c_{1}(\beta,\mu)\sum\limits_{(n_{1},n_{2})\in H_{n}^{1}}in_{1}%
\iint\nolimits_{\mathbb{R}^{2}}\widetilde{u}(n_{1},\tau_{1})\widetilde
{\overline{w}}(n_{2},\tau_{2})\widetilde{w}(n_3,\tau_3)d\tau_{1}d\tau_{2}\\
&  +c_{1}(\beta,\mu)\sum\limits_{n_{1}\in\mathbb{Z}}in_{1}\iint
\nolimits_{\mathbb{R}^{2}}\widetilde{u}(n_{1},\tau_{1})\widetilde{\overline
{u}}(-n_{1},\tau_{2})\widetilde{u}(n,\tau_3)d\tau_{1}d\tau
_{2}\\
&  +c_{1}(\beta,\mu)\sum\limits_{n_{1}\in\mathbb{Z}}in_{1}\iint
\nolimits_{\mathbb{R}^{2}}\widetilde{u}(n_{1},\tau_{1})\widetilde{\overline
{w}}(-n_{1},\tau_{2})\widetilde{w}(n,\tau_3)d\tau_{1}d\tau
_{2}\\
& -c_{1}(\beta,\mu)\iint\nolimits_{\mathbb{R}^{2}}in\widetilde{u}(n,\tau
_{1})\widetilde{\overline{u}}(-n,\tau_{2})\widetilde{u}(n,\tau_3)d\tau_{1}d\tau_{2}\\
& -c_{1}(\beta,\mu)\iint\nolimits_{\mathbb{R}^{2}}in\widetilde{u}(n,\tau_{1})
\widetilde{\overline{w}}(-n,\tau_{2})\widetilde{w}(n,\tau_3)d\tau_{1}d\tau_{2}\\
=& R_{1}(n,\tau)+R_{2}(n,\tau)+R_{3}(n,\tau)+R_{4}(n,\tau)+R_{5}%
(n,\tau)+R_{6}(n,\tau).
\end{split}
\end{equation}

\begin{remark}
The computation of $\widetilde{G}_{13}(n,\tau)$ and $\widetilde{G}_{12}(n,\tau)$ are
similar to $\widetilde{G}_{11}(n,\tau)$.
\end{remark}

We have for $\widetilde{G_{14}}(n,\tau)$:
\begin{equation}
\begin{split}
\widetilde{G_{14}}(n,\tau)  =& c_{4}(\alpha)\sum\limits_{(n_{1},n_{2})\in
H_{n}^{1}}\iint\nolimits_{\mathbb{R}^{2}}\widetilde{u}(n_{1},\tau
_{1})\widetilde{\overline{u}}(n_{2},\tau_{2})\widetilde{u}(n_3,\tau_3)d\tau_{1}d\tau_{2}\\
&  +c_{5}(\alpha,\sigma_{\alpha})\sum\limits_{(n_{1},n_{2})\in H_{n}^{1}}%
\iint\nolimits_{\mathbb{R}^{2}}\widetilde{u}(n_{1},\tau_{1})\widetilde
{\overline{w}}(n_{2},\tau_{2})\widetilde{w}(n_3,\tau_3)d\tau_{1}d\tau_{2}\\
&  +c_{6}(\alpha)\iint\nolimits_{\mathbb{R}^{2}}\| \widetilde{u}%
(\tau_{1})\| _{l_{n}^{2}}^{2}\|
\widetilde{u}(\tau
_{2})\| _{l_{n}^{2}}^{2}\widetilde{u}(n,\tau_3)d\tau_{1}d\tau_{2}\\
& +c_{7}(\alpha,\sigma_{\alpha})\iint\nolimits_{\mathbb{R}^{2}}\|
\widetilde{w}(\tau_{1})\| _{l_{n}^{2}}^{2}\|
\widetilde
{w}(\tau_{2})\| _{l_{n}^{2}}^{2}\widetilde{u}(n,\tau_3)d\tau_{1}d\tau_{2}\\
& +c_{8}(\alpha)\iint\nolimits_{\mathbb{R}^{2}}\widetilde{u}(n,\tau
_{1})\widetilde{\overline{u}}(-n,\tau_{2})\widetilde{u}(n,\tau_3)d\tau_{1}d\tau_{2}\\
& +c_{9}(\alpha)\iint\nolimits_{\mathbb{R}^{2}}\widetilde{w}(n,\tau
_{1})\widetilde{\overline{w}}(-n,\tau_{2})\widetilde{u}(n,\tau_3)d\tau_{1}d\tau_{2}.
\end{split}
\end{equation}

\begin{lemma}
Assume $\frac{q}{3}$ is not an integer and
$0<\theta<\frac{1}{12}$. For $s\geq\frac{1}{2}$ there exists $c>0$
such that
\begin{equation}\label{4.27}
\left(  \sum\limits_{n\in\mathbb{Z}}\langle n\rangle^{2s}\left(
\int
_{-\infty}^{+\infty}\frac{\widetilde{G_{1}(u,w)}(n,\tau)}{\langle q_{+}%
(n,\tau)\rangle}d\tau\right)  ^{2}\right)  ^{\frac{1}{2}}\leq
cf(u,w)\text{,}
\end{equation}
and
\begin{equation}\label{4.28}
\| G_{1}(u,w)\| _{(1,s,-\frac12)}\leq cf(u,w)\text{,}
\end{equation}
where
\begin{equation}\label{4.29}
\begin{split}
f(u&,w)=( \| u\| _{(1,s,\frac12 -\theta)}^{2}+\| w\| _{(1,s,\frac12-\theta)}^2)
 \| u\| _{(1,s,\frac12)}\\
&\qquad+\| u\| _{(1,s,\frac12-\theta)}\| w\| _{(1,s,\frac12-\theta)}\| u\| _{(1,s,\frac12)}\\
&+\left(  \| u\| _{(2,\frac12,0)}^{2}+\| w\| _{(2,\frac12,0)}^{2}\right)  \| u\| _{(1,s,0)}
+\|u\|_{(2,\frac12,0)}\| w\| _{(2,\frac12,0)}\| u\| _{(1,s,0)}.
\end{split}
\end{equation}
\end{lemma}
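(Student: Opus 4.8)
The plan is to prove the two estimates (4.27) and (4.28) in parallel by reducing them, via the triangle inequality and the decomposition (4.23), to a bound on each piece $G_{1j}$, $j=1,\dots,4$, separately. By the Remark the terms $G_{12}$ and $G_{13}$ are treated exactly as $G_{11}$, while $G_{14}$ carries no derivative and is therefore strictly easier; hence the heart of the matter is the estimate of the six pieces $R_1,\dots,R_6$ constituting $\widetilde{G}_{11}(n,\tau)$. Each $R_i$ naturally splits into a \emph{generic} contribution (the sum over $H_n^{1}$) and a \emph{diagonal} contribution (the terms with $n_1=n$ and the $\|\cdot\|_{L_x^2}^2$ factors). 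I expect the generic contributions to produce the first two lines of $f(u,w)$ and the diagonal contributions the last two.

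For the generic contributions I would argue by duality. To handle (4.28) I pair $\langle n\rangle^{s}\langle q_{+}(n,\tau)\rangle^{-\frac12}\widetilde{G}_{11}$ against an arbitrary $g\in \ell_n^2 L_\tau^2$ with $\|g\|=1$; to handle (4.27), after Cauchy--Schwarz in $\tau$ the weight $\langle q_{+}(n,\tau)\rangle^{-1}$ plays the role of the square of $\langle q_{+}\rangle^{-\frac12}$, so the same multilinear mechanism applies. In either case the task becomes the estimate of a sum over $(n_1,n_2)\in H_n^1$ and $(\tau_1,\tau_2)$, weighted by $\langle n\rangle^{s}\,|n_1|\,\langle q_{+}(n,\tau)\rangle^{-\frac12}$, of the product $\widetilde{u}(n_1,\tau_1)\,\widetilde{\overline{u}}(n_2,\tau_2)\,\widetilde{u}(n_3,\tau_3)$ (and its $u,w$-mixed analogues).

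The decisive tool is the algebraic identity (4.22), which forces
\[
\max\{|q_{+}(n,\tau)|,|q_{+}(n_1,\tau_1)|,|q_{-}(n_2,\tau_2)|,|q_{+}(n_3,\tau_3)|\}\gtrsim |n_1+n_2|\,|n-n_1|\,\Big|n-n_2-\tfrac{q}{3}\Big|\sim M_1 L M_2,
\]
where $M_1\geq L\geq M_2$ are the maximum, median and minimum of $|n-n_1|,|n_1+n_2|,|n-n_2|$; here the hypothesis $\frac{q}{3}\notin\mathbb{Z}$ is essential, since it keeps $|n-n_2-\frac{q}{3}|$ bounded below and comparable to $|n-n_2|$ when the latter is large, ruling out the resonance. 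I would then split into four cases according to which of the four modulations realizes the maximum. When the maximum falls on $|q_{+}(n,\tau)|$ the gain $\langle q_{+}(n,\tau)\rangle^{-\frac12}\lesssim (M_1LM_2)^{-\frac12}$ is at hand; when it falls on an input modulation, that input is measured with a full $\langle q_\pm\rangle^{\frac12}$, i.e.\ in the $(1,s,\frac12)$ norm, while the two remaining inputs are measured in the slightly weaker $(1,s,\frac12-\theta)$ norm. This is precisely the structure displayed in the first two lines of $f(u,w)$ in (4.29).

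The main obstacle is the derivative loss $|n_1|$ at the endpoint $s=\frac12$. To absorb it I would distribute the Sobolev weight through $\langle n\rangle^{s}\lesssim\langle n_1\rangle^{s}\langle n_2\rangle^{s}\langle n_3\rangle^{s}$ and then spend the modulation surplus $(M_1LM_2)^{\frac12}$ both on the factor $|n_1|$ and on a small power of frequency that makes the remaining $(n_1,n_2)$-sum summable; this is exactly where the range $0<\theta<\frac1{12}$ is consumed, as it supplies the room $\frac12-\theta$ in two of the factors and the convergence of the resulting series via Cauchy--Schwarz and an $\ell^2$ Young-type inequality. The genuinely delicate situation is the case where the maximum is on the output and $|n_1|$ is comparable to the largest frequency: one must then sub-divide according to which pair among $|n-n_1|,|n_1+n_2|,|n-n_2|$ is small, observing that in every near-resonant sub-case one of the other input frequencies becomes comparable to $n_1$, so the Sobolev weight can be redistributed, while the $\frac{q}{3}$-shift prevents exact cancellation. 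Finally, the diagonal and resonant pieces (those not summed over $H_n^1$) collapse to expressions carrying a factor $\|u\|_{L_x^2}^2$ or $\|w\|_{L_x^2}^2$; bounding these by the $Z$-type norm $\|\cdot\|_{(2,\frac12,0)}$ yields precisely the last two lines of $f(u,w)$, which completes the proof.
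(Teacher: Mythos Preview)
Your proposal is correct and follows essentially the same route as the paper: the generic pieces $R_1,R_2$ (summed over $H_n^1$) are handled via the identity (4.22), a split into the four regions $A_1$--$A_4$ according to the maximal modulation together with a case analysis on $M_1,L,M_2$ (the paper then reduces to frequency sums $I_1,I_2$ and cites Takaoka's Lemma~4.3 for their boundedness), while the resonant pieces $R_3$--$R_6$ are bounded directly by the $Z$-norms, giving the last two lines of $f(u,w)$. One small inaccuracy: the resonant pieces $R_3$--$R_6$ do not literally carry factors $\|u\|_{L_x^2}^2$ or $\|w\|_{L_x^2}^2$---those were precisely subtracted in forming $G_{11}$---but they are trilinear expressions at the restricted frequencies $n_2=-n_1$ or $n_1=n,\,n_2=-n$, and your $Z$-norm treatment of them is exactly what the paper does.
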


\begin{proof}
The parameter $\frac{q}{3}$ is not an integer because we need that
the third factor in the right hand side of (\ref{4.22}) never
vanishes. Then we have $|n -n_2- q/3| \sim \langle n -n_2
\rangle$. Note also that $|n -n_1| \sim \langle n -n_1 \rangle$
and $|n_1 +n_2| \sim \langle n_1 +n_2 \rangle$ for $n- n_1 \ne 0 $
and $n_1 +n_2 \ne 0$, respectively.

 From the Cauchy-Schwarz inequality, the left hand
side of (\ref{4.27}) is bounded by
\begin{equation}\label{4.30}
\left(  \sum\limits_{n\in\mathbb{Z}}\langle
n\rangle^{2s}\int_{-\infty
}^{+\infty}\frac{| \widetilde{G_{1}(u,w)}(n,\tau)| ^{2}%
}{\langle q_{+}(n,\tau)\rangle^{2(1-a)}}d\tau\int_{-\infty}^{+\infty}%
\frac{d\tau}{\langle q_{+}(n,\tau)\rangle^{2a}}\right)  ^{\frac{1}{2}}%
\text{,}
\end{equation}
where $a$ will be determined later. Consider first
$\widetilde{G_{1}(u,w)}(n,\tau)=R_{2}(n,\tau)$. In this case, \eqref{4.30} is bounded by
\begin{equation}\label{4.31}
\begin{split}
&\Big( \sum\limits_{n}\sum\limits_{(n_{1},n_{2})\in H_{n}^{1}}\Big( \int_{\mathbb{R}^{3}}
\frac{\langle n\rangle^{2s}| n_1|^2}{\langle q_{+}(n,\tau)\rangle^{2(1-a)}}| \widetilde{u}(n_1,\tau_1)| ^2\\
&\times |\widetilde{\overline{w}}(n_{2},\tau_{2})| ^{2}
|\widetilde{w}(n_3,\tau_3)|^2 d\tau_{1}d\tau_{2}d\tau\Big)  I_{a}\Big)^{\frac{1}{2}}\\
=&\Big( \sum\limits_{n}\sum\limits_{(n_{1},n_{2})\in H_{n}^{1}}
\int_{\mathbb{R}^{3}}\frac{\langle n\rangle^{2s}|n_1|^2}{\langle q_{+}(n,\tau)\rangle^{2(1-a)}}
\frac{g(n_{1},\tau_{1})h(n_{2},\tau_{2})p(n_3,\tau_3)}{\langle
n_{1}\rangle^{2s}\langle n_{2}\rangle^{2s}\langle n_3\rangle^{2s}}\\
&\;\;\; \times\frac{d\tau_{1}d\tau_{2}d\tau}{\langle
q_{+}(n_{1},\tau _{1})\rangle^{1-2\theta}\langle q_{-}(n_{2},\tau_{2})\rangle^{1-2\theta}
\langle q_{+}(n_3,\tau_3)\rangle^{1-2\theta}}I_{a}\Big)  ^{\frac{1}{2}},
\end{split}
\end{equation}
where%
\[
I_{a}=\int_{-\infty}^{+\infty}\frac{d\tau}{\langle q_{+}(n,\tau)\rangle^{2a}%
}\text{,}%
\]%
\[
g(n_{1},\tau_{1})=\langle n_{1}\rangle^{2s}\langle
q_{+}(n_{1},\tau _{1})\rangle^{1-2\theta}|
\widetilde{u}(n_{1},\tau_{1})|
^{2}\text{,}%
\]%
\[
h(n_{2},\tau_{2})=\langle n_{2}\rangle^{2s}\langle
q_{-}(n_{2},\tau _{2})\rangle^{1-2\theta}|
\widetilde{\overline{w}}(n_{2},\tau
_{2})| ^{2}%
\]
and
\begin{equation*}
p(n_3,\tau_3)=\langle n_3\rangle^{2s}\langle q_{+}(n_3,\tau_3)\rangle^{1-2\theta}
|\widetilde{w}(n_3,\tau_3)| ^{2}.
\end{equation*}
To estimate (\ref{4.31}) we divide the region of integration in four
parts:
\[
A_1=\left\{ \left(  \tau_{1},\tau_{2},\tau\right)  :\;| q_{+}(n,\tau)|
\geq\max\left\{| q_{+}(n_{1},\tau_{1})| ,
| q_{-}(n_{2},\tau_2)|, | q_{+}(n_3,\tau_3)|\right\}  \right\}
\text{,}
\]
\[
A_{2}=\left\{  \left(  \tau_{1},\tau_{2},\tau\right)  :| q_{+}(n_{1},\tau_{1})|
\geq\max\left\{|q_{+}(n,\tau )| , |q_{-}(n_{2},\tau_{2})| , |q_{+}(n_3,\tau_3)| \right\}
\right\},
\]%
\[
A_{3}=\left\{  \left(  \tau_{1},\tau_{2},\tau\right)  :\;| q_{-}(n_{2},\tau_{2})|
 \geq\max\left\{|q_{+}(n_{1},\tau _{1})| , |q_{+}(n,\tau)| ,| q_{+}(n_3,\tau_3)| \right\} \right\},
\]%
\[
A_{4}=\left\{  \left(  \tau_{1},\tau_{2},\tau\right)  :\;|q_{+}(n_3,\tau_3)|
\geq\max\left\{ | q_{+}(n_{1},\tau_{1})|
,| q_{-}(n_{2},\tau _{2})| , |q_{+}(n,\tau)| \right\}  \right\}.
\]
We also consider the sum in $n_{1}$ and $n_{2}$ of (\ref{4.31}) in the
following three cases
\begin{equation}\label{4.32}
M_{1}\geq L\geq\frac{| n| }{5}>M_{2}\text{,}
\end{equation}
\begin{equation}\label{4.33}
M_{1}\geq\frac{2| n| }{3}\geq\frac{|n|}{5}>L\geq M_{2}\text{,}
\end{equation}
\begin{equation}\label{4.34}
M_{1}\geq L\geq M_{2}\geq\frac{| n| }{5}
\end{equation}

and $I_{a}$ is bounded as
\begin{equation}\label{4.36}
\begin{split}
\int_{-\infty}^{+\infty}\frac{\langle
n^{2}+q_{+}(n,\tau)\rangle^{2a}d\tau }{\langle
n^{2}+q_{+}(n,\tau)\rangle^{2a}\langle q_{+}(n,\tau)\rangle^{2a}}
& \leq\int_{-\infty}^{+\infty}\frac{c\langle
q_{+}(n,\tau)\rangle^{2a}d\tau
}{\langle n^{2}+q_{+}(n,\tau)\rangle^{2a}\langle q_{+}(n,\tau)\rangle^{2a}}\\
&  \leq c\int_{-\infty}^{+\infty}\frac{d\tau}{\langle n^{2}+q_{+}(n,\tau)\rangle^{2a}}\\
&  \leq\frac{c}{\langle n\rangle^{4a-2}}\text{,\;\;\;\;
for}\;a>\frac{1}{2}.
\end{split}
\end{equation}

The first inequality in (\ref{4.36}) follows from $|
q_{+}(n,\tau)| \geq cn^{2}\langle M_{2}\rangle\geq cn^{2}$ and the
last inequality is a consequence of
\begin{equation}\label{4.37}
\int_{\mathbb{R}}\frac{d\tau}{\langle\tau\rangle^{\alpha}\langle\tau
-\theta\rangle^{\beta}}\leq\frac{c}{\langle\theta\rangle^{d}},
\qquad \text{with}, \qquad d= \min \{\alpha, \beta, \alpha+ \beta
-1\}.
\end{equation}

Then, in the case (\ref{4.32}) and in the region $A_{1}$, we bound (\ref{4.31}) by

\begin{equation}\label{4.35}
\begin{split}
&\underset{n,\tau}{\sup}\Big[ \frac{\langle n\rangle^{s-2a+1}}{\langle q_{+}(n,\tau)\rangle^{(1-a)}}
\Big(\sum\limits_{(n_{1},n_{2})\in A_{n,\tau}}\!\!\frac{|n_1| ^{2}}{\langle n_{1}\rangle ^{2s}
\langle n_{2}\rangle^{2s}\langle n_3\rangle^{2s}} H_{\theta}(n,n_{1},n_{2})\Big) ^{1/2}
\Big]\\
& \hspace{0.5cm} \times c\| u\| _{(1,s,\frac12-\theta)}\| w\| _{(1,s,\frac12-\theta)}^{2}\text{,}
\end{split}
\end{equation}
where
\begin{equation*}
\begin{split}
A_{n,\tau} &  =\left\{  \left(  n_{1},n_{2}\right)  :M_{1}\geq L\geq \frac{| n| }{5}>M_{2},n\neq
n_{1},n_{1}\neq-n_{2},\right.
\\
& \qquad \left.  | q_{+}(n,\tau)| \geq | n_1+n_2| | n-n_{1}| | n-n_{2}-\frac{q}{3}| \right\},
\; \; \; \text{and}
\end{split}
\end{equation*}
\[
H_{\theta}(n,n_{1},n_{2})=\int_{\mathbb{R}^{2}}\frac{d\tau_{1}d\tau_{2}%
}{\langle q_{+}(n_{1},\tau_{1})\rangle^{1-2\theta}\langle
q_{-}(n_{2},\tau _{2})\rangle^{1-2\theta}\langle
q_{+}(n_3,\tau_3)\rangle^{1-2\theta}}
\]

Using the identity (\ref{4.22}) and inequality (\ref{4.37}), we bounded $H_{\theta}%
(n,n_{1},n_{2})$ by
\begin{equation}\label{4.38}
\begin{split}
& \int_{\mathbb{R}}\frac{d\tau_{1}}{\langle q_{+}(n_1,\tau_1)\rangle^{1\!-\!2\theta}\langle q_{+}
(n_{1},\tau_{1})\!-\![q_{+}(n,\tau)\!-\!3(  n_1\!+\!n_2)(n\!-\!n_1)( n\!-\!n_2\!-\!\frac{q}{3})]
\rangle^{1\!-\!4\theta}}\\
&  \leq\frac{c}{\langle q_{+}(n,\tau)-3( n_1+n_2)(n-n_1)(  n-n_2-\frac{q}{3})  \rangle^{1-6\theta}}\\
&  \leq\frac{c}{\langle q_{+}(n,\tau)-3\left(  n_{1}+n_{2}\right)
\left(n-n_{1}\right)  \left(  n-n_{2}-\frac{q}{3}\right)  \rangle^{1-\varepsilon}},
\end{split}
\end{equation}
for $\theta\in\left(  0,\frac{1}{12}\right)  $, and
$\varepsilon\in\left( 6\theta,\frac{1}{2}\right)$.
\vspace{0,5cm}

Then, (\ref{4.35}) is estimate by
\begin{equation}\label{4.39}
\begin{split}
&\underset{n,\tau}{c\sup}\Big[\frac{\langle n\rangle^{s-2a+1}}{\langle q_{+}(n,\tau)\rangle^{(1-a)}}
\Big(\sum\limits_{(n_{1},n_{2})\in A_{n,\tau}}
\frac{|n_1|^2}{\langle n_{1}\rangle^{2s}\langle n_{2}\rangle^{2s}\langle n_3\rangle^{2s}}\\
&\times
\frac{1}{\langle q_{+}(n,\tau)-3(n_{1}+n_{2})(n-n_1)(n-n_2-\frac{q}{3})\rangle^{1-\varepsilon}}
\Big)^{\frac{1}{2}}\Big]\\
&\times\|u\| _{(1,s,\frac12-\theta)}\| w\| _{(1,s,\frac12-\theta)}^{2}\\
&  =\underset{n,\tau}{c\sup}\left(  I_{1}\right) ^{\frac{1}{2}}\|
u\| _{(1,s,\frac12-\theta)}\| w\| _{(1,s,\frac12-\theta)}^{2}\leq
c\|u\| _{(1,s,\frac12 )}\| w\| _{(1,s,\frac12-\theta)}^2,
\end{split}
\end{equation}
where
\begin{equation*}
\begin{split}
I_{1}  &  =\frac{\langle n\rangle^{2s-4a+2}}{\langle q_{+}(n,\tau
)\rangle^{2(1-a)}}\left(  \sum\limits_{(n_{1},n_{2})\in A_{n,\tau}}%
\frac{|n_1| ^{2}}{\langle n_{1}\rangle^{2s}\langle
n_{2}\rangle^{2s}\langle n_3\rangle^{2s}}\right. \\
&  \left. \hspace{0.4cm} \times\frac{1}{\langle q_{+}(n,\tau)-3\left(
n_{1}+n_{2}\right) \left(  n-n_{1}\right)  \left(
n-n_{2}-\frac{q}{3}\right)  \rangle
^{1-\varepsilon}}\right).
\end{split}
\end{equation*}
The proof that  $I_{1}\leq c$ can be found in  \cite{38}, Lemma
4.3. The cases (\ref{4.33}) and (\ref{4.34}) are analogous.

Now, we bound \eqref{4.30} in the region $A_{2}$ with $a>\frac{1}{2}$ and
$\widetilde{G_{1}(u,w)}(n,\tau)=R_{2}(n,\tau)$,
\begin{equation}\label{4.40}
c\left(  \sum\limits_{n\in\mathbb{Z}}\langle n\rangle^{2s}\int
\nolimits_{\mathbb{R}}\frac{| R_{2}(n,\tau)|
^{2}}{\langle q_{+}(n,\tau)\rangle^{2(1-a)}}d\tau\right)
^{\frac{1}{2}}=c\Big\|
\frac{\langle n\rangle^{s}R_{2}(n,\tau)}{\langle q_{+}(n,\tau)\rangle^{(1-a)}}
\Big\| _{l_{n}^{2}L_{\tau}^{2}}.
\end{equation}
By duality, (\ref{4.40}) is equal to
\begin{equation}\label{4.41}
\underset{\| h(n,\tau)\|_{l_{n}^{2}L_{\tau}^{2}}}{c\sup}\Big|
\sum\limits_{n\in\mathbb{Z}}\int\nolimits_{\mathbb{R}}\frac{\langle
n\rangle^{s}R_{2}(n,\tau)\overline{h(n,\tau)}}{\langle q_{+}(n,\tau)\rangle^{(1-a)}}d\tau\Big|
=\underset{\| h(n,\tau)\|_{l_{n}^{2}L_{\tau}^{2}}}{c\sup}|A(h)|.
\end{equation}
Note that
\begin{equation}\label{4.42}
\begin{split}
&|A(h)|=\Big|\sum\limits_{n\in\mathbb{Z}}\int\nolimits_{\mathbb{R}}\frac{\langle n\rangle^{s}R_{2}
(n,\tau)\overline{h(n,\tau)}}{\langle q_{+}(n,\tau)\rangle^{(1-a)}}d\tau\Big|\\
&\leq c\sum\limits_{n\in\mathbb{Z}}\langle n\rangle^{s}\left(
\int\nolimits_{\mathbb{R}}\sum\limits_{(n_{1},n_{2})\in H_{n}^{1}}%
\frac{| n_1| |\overline{h(n,\tau)}|}{\langle q_{+}(n,\tau)\rangle^{(1-a)}}
\iint\nolimits_{\mathbb{R}^{2}}%
\frac{g\left(  n_{1},\tau_{1}\right)  p(n_{2},\tau_{2})}{\langle n_{1}%
\rangle^{s}\langle n_{2}\rangle^{s}\langle n_3\rangle^{s}}\right.\\
&\quad \left.  \times\frac{r(n_3,\tau_3)d\tau_{1} d\tau_{2} d\tau}{\langle
q_{+}(n_{1},\tau_{1})\rangle^{\frac{1}{2}}\langle
q_{-}(n_{2},\tau_{2})\rangle^{\frac{1}{2}-\theta}\langle q_{+}(n_3,\tau_3)
\rangle^{\frac{1}{2}-\theta}}\right) \\
 & \leq c\underset{n_{1},\tau_{1}}{\sup}\left(  \sum\limits_{n\in\mathbb{Z}%
}\langle n\rangle^{s}\int\nolimits_{\mathbb{R}}\sum\limits_{n_{2}}%
\frac{| n_1| |\overline{h(n,\tau)}|}{\langle n_{1}\rangle^{s}\langle q_{+}(n_{1},\tau_{1})\rangle^{\frac{1}{2}}
}\int\nolimits_{\mathbb{R}}\frac{p(n_{2},\tau_{2})}{\langle
n_{2}\rangle
^{s}\langle n_3\rangle^{s}}\right.\\
&\quad\left.  \times\frac{d\tau_{2}d\tau}{\langle
q_{+}(n,\tau)\rangle ^{(1-a)}\langle
q_{-}(n_{2},\tau_{2})\rangle^{\frac{1}{2}-\theta}\langle
q_{+}(n_3,\tau_3)\rangle^{\frac{1}{2}-\theta}}\right)\\
&\quad\times\| g\|
_{_{l_{n_{1}}^{2}L_{\tau_{1}}^{2}}}\| r\|
_{_{l_{n1}^{2}L_{\tau_{1}}^{2}}}\text{,}
\end{split}
\end{equation}
where
\begin{equation*}
\begin{split}
g\left(  n_{1},\tau_{1}\right) &=\langle n_{1}\rangle^{2s}\langle q_{+}%
(n_{1},\tau_{1})\rangle^{\frac{1}{2}}|\widetilde{u}(n_{1},\tau_{1})|, \\
p(n_{2},\tau_{2})&=\langle n_{2}\rangle^{s}\langle q_{-}(n_{2},\tau_{2}%
)\rangle^{\frac{1}{2}-\theta}| \widetilde{\overline{w}}(n_{2},\tau_{2})|
\end{split}
\end{equation*}
and
\begin{equation*}
\begin{split}
r(n_3,\tau_3)=\langle n_3\rangle^{s}\langle q_{+}(n_3
,\tau_3\rangle^{\frac{1}{2}-\theta}|\widetilde{w}(n_3,\tau_3)| .
\end{split}
\end{equation*}
Therefore we can bound (3.39) by
\begin{equation}\label{4.43}
\begin{split}
&  \underset{n_{1},\tau_{1}}{c\sup}\left(  \frac{|n_1|
}{\langle n_{1}\rangle^{s}\langle q_{+}(n_{1},\tau_{1})\rangle^{\frac{1}{2}}%
}\left(  \sum\limits_{(n,n_{2})\in D_{n_{1},\tau_{1}}}\int
\nolimits_{\mathbb{R}^{2}}\frac{\langle n\rangle^{2s}}{\langle n_{2}%
\rangle^{2s}\langle n_3\rangle^{2s}}\right.  \right. \nonumber\\
& \hspace{0.4cm} \left.  \left.  \times\frac{d\tau_{2}d\tau}{\langle
q_{+}(n,\tau )\rangle^{2(1-a)}\langle
q_{-}(n_{2},\tau_{2})\rangle^{1-2\theta}\langle
q_{+}(n_3,\tau-\tau_3)\rangle^{1-2\theta}}\right)
^{\frac{1}{2}}\right) \\
& \hspace{0.4cm} \times\| g\|
_{_{l_{n_{1}}^{2}L_{\tau_{1}}^{2}}}\| r\|
_{_{l_{n1}^{2}L_{\tau_{1}}^{2}}}\| p\|
_{_{l_{n_{2}}^{2}L_{\tau_{2}}^{2}}}\|
\overline{h}\|
_{_{l_{n}^{2}L_{\tau}^{2}}}\text{,}\nonumber
\end{split}
\end{equation}
where
\begin{equation*}
\begin{split}
D_{n_{1},\tau_{1}}  &  =\left\{  \left(  n,n_{2}\right)  :n\neq n_{1},n_{1}\neq-n_{2},\right. \\
&  \left.  | q_{+}(n_{1},\tau_{1})|
\geq  |n_{1}+n_{2}| | n-n_{1}| | n-n_2-\frac{q}{3}| \right\}.
\end{split}
\end{equation*}
It follows from (\ref{4.40})-(\ref{4.43}) that (\ref{4.30}) is
bounded by
\begin{equation*}
\begin{split}
&  \underset{n_{1},\tau_{1}}{c\sup}\left(  \frac{|n_1|
}{\langle n_{1}\rangle^{s}\langle q_{+}(n_{1},\tau_{1})\rangle^{\frac{1}{2}}%
}\left(  \sum\limits_{(n,n_{2})\in D_{n_{1},\tau_{1}}}\int
\nolimits_{\mathbb{R}^{2}}\frac{\langle n\rangle^{2s}}{\langle n_{2}%
\rangle^{2s}\langle n_3\rangle^{2s}}\right.  \right. \\
&\hspace{0.4cm} \left.  \left.  \times\frac{d\tau_{2}d\tau}{\langle
q_{+}(n,\tau )\rangle^{2(1-a)}\langle
q_{-}(n_{2},\tau_{2})\rangle^{1-2\theta}\langle
q_{+}(n_3,\tau_3)\rangle^{1-2\theta}}\right)
^{\frac{1}{2}}\right) \\
&\hspace{0.4cm}  \times c\| u\| _{(1,s,\frac12
)}\| w\| _{(1,s,\frac12-\theta)}^{2}\\
&  =\underset{n_{1},\tau_{1}}{c\sup}\left(  I_{2}\right)  ^{\frac{1}{2}
}\| u\| _{(1,s,\frac12)}\| w\| _{(1,s,\frac12
-\theta)}^{2}\leq c\| u\| _{(1,s,\frac12
)}\| w\| _{(1,s,\frac12-\theta)}^{2},
\end{split}
\end{equation*}
where
\begin{align*}
I_{2}  &  =\frac{\| n_{1}\| ^{2}}{\langle
n_{1}\rangle
^{2s}\langle q_{+}(n_{1},\tau_{1})\rangle^{1}}\left(  \sum\limits_{(n,n_{2})\in
D_{n_{1},\tau_{1}}}\int\nolimits_{\mathbb{R}^{2}}\frac{\langle
n\rangle^{2s}}{\langle n_{2}\rangle^{2s}\langle n_3\rangle^{2s}
}\right. \\
&\quad  \left.  \times\frac{d\tau_{2}d\tau}{\langle
q_{+}(n,\tau)\rangle
^{2(1-a)}\langle q_{-}(n_{2},\tau_{2})\rangle^{1-2\theta}\langle q_{+}%
(n_3,\tau_3)\rangle^{1-2\theta}}\right)  \text{.}%
\end{align*}

The proof that $I_{2}\leq c$ is similar to Lemma 4.3 in \cite{38}.
The case $\widetilde{G_{1}(u,w)}(n,\tau)=R_{1}(n,\tau)$ is similar to (\ref{4.40}). In the case
$\widetilde{G_{1}(u,w)}(n,\tau)=R_{4}(n,\tau)$, (\ref{4.30}) is bounded by
\begin{equation}\label{4.44}
\begin{split}
& c\Big\{\sum\limits_{n\in\mathbb{Z}}\langle n\rangle^{2s}\int\nolimits_{\mathbb{R}}\!\!
\Big( \sum\limits_{n_{1}\in\mathbb{Z}}\iint\nolimits_{\mathbb{R}^{2}}|n_1| |\widetilde{u}(n_1,\tau_1)|
|\widetilde{\overline{w}}(-n_1,\tau_2)| |\widetilde{w}(n,\tau_3)| d\tau_1d\tau_2\Big) ^{2}d\tau\Big\} ^{1/2}\\
&\leq c\Big\{  \sum\limits_{n\in\mathbb{Z}}\langle n\rangle^{2s}
\Big( \sum\limits_{n_{1}\in\mathbb{Z}}\int\nolimits_{\mathbb{R}}\Big(
\iint\nolimits_{\mathbb{R}^{2}}\!\!| n_1||\widetilde{u}(n_1,\tau_1)| | \widetilde{\overline{w}}(-n_{1},\tau_{2})|
|\widetilde{w}(n,\tau_3)|d\tau_{1}d\tau_{2}\Big) ^{2}d\tau\Big)\Big\} ^{1/2}.
\end{split}
\end{equation}
Using Minkowski's inequality, (\ref{4.44}) is bounded by
\begin{align*}
&  \left(  \sum\limits_{n\in\mathbb{Z}}\langle n\rangle^{2s}\int
\nolimits_{\mathbb{R}}|\widetilde{w}(n,\tau_3)| ^{2}d\tau\right) ^{\frac{1}{2}}
\left(  \sum\limits_{n_{1}\in\mathbb{Z}}\iint\nolimits_{\mathbb{R}^{2}}|n_1|
 |\widetilde{u}(n_1,\tau_1)| | \widetilde{\overline{w}}(-n_{1},\tau_{2})| d\tau_{1}d\tau_{2}\right) \\
&  \leq c\left(  \sum\limits_{n\in\mathbb{Z}}\langle n\rangle^{2s}
\int\nolimits_{\mathbb{R}}|\widetilde{w}(n,\tau_3)| ^{2}d\tau\right)  ^{\frac{1}{2}}
\left(  \sum\limits_{n_{1}\in\mathbb{Z}}\left[  \int\nolimits_{\mathbb{R}}\langle n_{1}\rangle^{\frac {1}{2}}|
\widetilde{u}(n_{1},\tau_{1})| d\tau_{1}\right]
\right. \\
& \hspace{0.4cm} \left.  \times\left[  \int\nolimits_{\mathbb{R}}\langle
n_{1}\rangle ^{\frac{1}{2}}|\widetilde{\overline{w}}(-n_{1},\tau_{2})|
d\tau_{2}\right]  \right) \\
&  \leq c\| \langle
n\rangle^{s}\widetilde{w}(n,\tau)\|
_{_{l_{n}^{2}L_{\tau}^{2}}}\| \langle n_{1}\rangle^{\frac{1}{2}%
}\widetilde{u}(n_{1},\tau)\| _{_{l_{n_{1}}^{2}L_{\tau}^{1}}%
}\| \langle n_{1}\rangle^{\frac{1}{2}}\widetilde{w}(n_{1}%
,\tau)\| _{_{l_{n_{1}}^{2}L_{\tau}^{1}}}\\
&  =c\| w\| _{(1,s,0)}\| u\| _{(2,\frac12)}\| w\| _{(2,\frac12
)}\text{.}
\end{align*}
The case $\widetilde{G_{1}(u,w)}(n,\tau)=R_{3}(n,\tau)$ is similar
to $R_{4}(n,\tau)$, for $w=u$. In the case of $\widetilde{G_{1}(u,w)}(n,\tau)=R_{6}(n,\tau)$ we bound (\ref{4.30}) by
\begin{equation*}
\begin{split}
&  c\left(  \sum\limits_{n\in\mathbb{Z}}\langle n\rangle^{2s}\int
\nolimits_{\mathbb{R}}\left(
\iint\nolimits_{\mathbb{R}^{2}}| n| |\widetilde{u}(n,\tau_{1})|
|\widetilde{\overline{w}}(-n,\tau_{2})|
|\widetilde {w}(n,\tau-\tau_{1}-\tau_{2})| d\tau_{1}d\tau_{2}\right)  ^{2}%
d\tau\right)  ^{\frac{1}{2}}\\
&  \leq c\left(  \sum\limits_{n\in\mathbb{Z}}\left[  \iint
\nolimits_{\mathbb{R}^{2}}| n| |\widetilde {u}(n,\tau_{1})|
|\widetilde{\overline{w}}(-n,\tau_{2})| d\tau_{1}d\tau_{2}\right]^{2}\right)^{\frac{1}{2}}
\left(\sum\limits_{n\in\mathbb{Z}}\langle n\rangle^{2s}
\int\nolimits_{\mathbb{R}}|\widetilde{w}(n,\tau_3)| ^{2}d\tau\right)^{\frac{1}{2}}\\
&  \leq c\| \langle n\rangle^{s}\widetilde{w}(n,\tau)\|_{_{l_{n}^{2}L_{\tau}^{2}}}
\|\langle n\rangle^{\frac{1}{2}}\widetilde{u}(n,\tau_{1})\|_{_{l_{n}^{2}L_{\tau_{1}}^{1}}}
\| \langle n\rangle^{\frac{1}{2}}\widetilde{w}(n,\tau_{2})\|_{_{l_{n}^{2}L_{\tau_{2}}^{1}}}\\
&  =c\| w\| _{(1,s,0)}\| u\| _{(2,\frac12,0)}\| w\| _{(2,\frac12,0)}.
\end{split}
\end{equation*}
For the cases $\widetilde{G_{1}(u,w)}(n,\tau)=R_{i}(n,\tau)$,
$i=7,...,12$, we follow a similar argument.

The proof of (\ref{4.28}) follows the same lines as (\ref{4.27}), choosing
$a=\frac{1}{2}$ and not considering $I_{a}$, because
\begin{equation}\label{4.45}
\| G_{1}(u,w)\| _{(1,s,- \frac12 )}=\left(
\sum\limits_{n\in\mathbb{Z}}\langle n\rangle^{2s}\int_{-\infty
}^{+\infty}\frac{| \widetilde{G_{1}(u,w)}(n,\tau)| ^{2}%
}{\langle q_{+}(n,\tau)\rangle}d\tau\right)  ^{\frac{1}{2}}\text{.}%
\end{equation}
\end{proof}
The following lemma is found in \cite{38}, Lemma 4.6.
\begin{lemma} For  $s\in\mathbb{R}$, $0<\varepsilon<\frac{1}{2}$, $T\in\left(
0,1\right)$ and $0<\theta^{\prime}<\theta<\frac{1}{2}$, we have the following
inequalities%
\[
\| \psi_{T}(t)f\| _{(1,s, \frac12)}
\leq c(\varepsilon)T^{-\varepsilon}\| f\| _{(1,s,\frac12
)}\text{,}
\]
\[
\| \psi_{T}(t)f\| _{(1,s,
\frac12
-\theta)}\leq cT^{\theta-\theta^{\prime}}\| f\| _{(1,s,\frac12
)}\text{,}%
\]%
\[
\| \psi_{T}(t)f\| _{(2,s,0)}\leq c\|
f\|
_{(2,s,0)}\text{.}%
\]
\end{lemma}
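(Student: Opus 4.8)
The plan is to reduce all three inequalities to one-dimensional estimates in the time variable and then invoke the standard mapping properties of the cutoff multiplier $\psi_T(t)=\psi(t/T)$. The key observation is the product structure of the norms: since $q_{+}(n,\tau)=\tau-\phi(n)$ with $\phi(n)=n^{3}-\frac{q}{2}n^{2}-c_{0}n$, and since $\psi_T$ depends only on $t$, multiplication by $\psi_T$ acts separately on each spatial mode $n$. For fixed $n$ set $G_{n}(\sigma)=\widetilde{f}(n,\sigma+\phi(n))$ and let $g_{n}$ be its inverse Fourier transform in time; then $\widetilde{\psi_T f}(n,\sigma+\phi(n))=(\widehat{\psi}_{T}\ast G_{n})(\sigma)$, so after the substitution $\sigma=\tau-\phi(n)$ the weight $\langle q_{+}(n,\tau)\rangle^{b}$ becomes $\langle\sigma\rangle^{b}$ and
\[
\|\psi_T f\|_{(1,s,b)}^{2}=\sum_{n\in\mathbb{Z}}\langle n\rangle^{2s}\,\|\psi_T g_{n}\|_{H_{t}^{b}}^{2},\qquad \|\psi_T f\|_{(2,s,0)}^{2}=\sum_{n\in\mathbb{Z}}\langle n\rangle^{2s}\,\|\widehat{\psi}_{T}\ast G_{n}\|_{L_{\sigma}^{1}}^{2}.
\]
Hence each inequality reduces to the corresponding one-dimensional multiplier bound with a constant uniform in $n$, the weighted sum over $n$ simply riding along.

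For the third inequality I would use only Young's inequality together with scaling: $\|\widehat{\psi}_{T}\ast G_{n}\|_{L^{1}}\leq\|\widehat{\psi}_{T}\|_{L^{1}}\|G_{n}\|_{L^{1}}$, and since $\widehat{\psi}_{T}(\sigma)=T\,\widehat{\psi}(T\sigma)$ one has $\|\widehat{\psi}_{T}\|_{L^{1}}=\|\widehat{\psi}\|_{L^{1}}$, independent of $T$. Summing against $\langle n\rangle^{2s}$ gives $\|\psi_T f\|_{(2,s,0)}\leq c\,\|f\|_{(2,s,0)}$ with a $T$-independent constant.

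The two remaining inequalities rest on the classical time-localization estimate in Bourgain spaces: for $0\leq b'\leq b<\frac12$,
\[
\|\psi_T g\|_{H_{t}^{b'}}\leq c\,T^{\,b-b'}\|g\|_{H_{t}^{b}},
\]
which I would establish by writing $\widehat{\psi_T g}=\widehat{\psi}_{T}\ast\widehat{g}$, splitting via $\langle\sigma\rangle^{b'}\lesssim\langle\sigma-\rho\rangle^{b'}+\langle\rho\rangle^{b'}$, and bounding each piece by Young's inequality, using $\|\langle\cdot\rangle^{b'}\widehat{\psi}_{T}\|_{L^{1}}\lesssim T^{-b'}$ and the constraint $b'<\frac12$. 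Granting this, the second inequality of the lemma is immediate: I take $b=\frac12-\theta'$ (which is $<\frac12$) and $b'=\frac12-\theta$ (with $\theta>\theta'$, so $b'<b$), obtaining the gain $T^{\theta-\theta'}$, and absorb the source norm via $\|g\|_{H^{1/2-\theta'}}\leq\|g\|_{H^{1/2}}$.

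The main obstacle is the first inequality, namely the endpoint $b=b'=\frac12$, where the gain estimate degenerates: multiplication by $\psi_T$ is not uniformly bounded on $H_{t}^{1/2}$ (there is a logarithmic divergence), which is precisely why only the weak loss $T^{-\varepsilon}$ is asserted. The crude splitting argument yields merely the bound $T^{-1/2}$, and interpolating it against the gain estimate improves the exponent; iterating this interpolation drives the loss below any prescribed $\varepsilon>0$. The sharp endpoint bound, however, is most cleanly obtained by the dyadic decomposition of the multiplier in \cite{38}, Lemma 4.6, which I would follow here; this endpoint analysis is the only genuinely delicate point, the other two inequalities being routine once the reduction to the time variable is in place.
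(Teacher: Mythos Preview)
The paper does not prove this lemma at all: it simply records that the result is Lemma~4.6 of Takaoka~\cite{38} and moves on. Your proposal therefore supplies strictly more than the paper does, and for the second and third inequalities your arguments are correct and standard. The reduction to a one-dimensional estimate in $t$ via the substitution $\sigma=\tau-\phi(n)$ is exactly the right device; the Young inequality argument for the $(2,s,0)$ bound is clean, and the use of the sub-critical time-localization estimate $\|\psi_T g\|_{H^{b'}}\lesssim T^{b-b'}\|g\|_{H^{b}}$ with $b=\tfrac12-\theta'$, $b'=\tfrac12-\theta$ is the usual route to the middle inequality.

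One small caution: your remark that ``iterating this interpolation drives the loss below any prescribed $\varepsilon>0$'' for the endpoint $b=\tfrac12$ is not a complete argument as stated---interpolating the crude $T^{-1/2}$ bound on $H^{1/2}$ against the uniform bound on $H^{b'}$ for $b'<\tfrac12$ does not by itself return you to $H^{1/2}$ with a smaller loss, since the target space moves under interpolation. The $T^{-\varepsilon}$ bound genuinely requires the frequency-localized (dyadic) analysis you allude to, which is precisely what Takaoka carries out. Since you explicitly say you would follow \cite{38}, Lemma~4.6, for this step, there is no real gap---and this is exactly what the paper itself does.
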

Now we are able to prove the main theorem of this section.

\textit{Proof of Theorem 1.6}.
Let $s\geq\frac{1}{2}$. With all the estimates at hand, for $T\in\left(0,1\right) $,
define%
\[
Y_{T}^{a}=\left\{  \overrightarrow{v}\in Y_{s}\times
Y_{s}:\text{ \ }\| \overrightarrow{v}\|
_{Y_{s}\times Y_{s}}\leq
a\right\}  \text{,}%
\]%
\begin{align}\label{4.46}
\Phi(\overrightarrow{u})  &  =\Psi(t)W_{p}(t)\overrightarrow{{u}_{0}}
-\Psi(t)\int\nolimits_{0}^{t}W_{p}(t-t^{\prime})G(\Psi_{T}(t^{\prime
})\overrightarrow{u}(t^{\prime}))dt^{\prime}\\
&  =\left(
\begin{array}
[c]{c}%
\psi(t)S_{p}(t)u_{0}-\psi(t)\int\nolimits_{0}^{t}S_{p}(t-t^{\prime})G_{1}(\psi
_{T}u,\psi_{T}w)(t^{\prime})dt^{\prime}\\
\psi(t)S_p(t)w_{0}-\psi(t)\int\nolimits_{0}^{t}S_p(t-t^{\prime})G_{1}(\psi
_{T}w,\psi_{T}u)(t^{\prime})dt^{\prime}%
\end{array}
\right)\text{.} \nonumber
\end{align}
From the lemmata 4.2 and 4.5 we have that
\begin{align*}
\| \Phi(\overrightarrow{u})\| _{Y_{s}\times
Y_{s}} &  \leq c\| \overrightarrow{{u}_{0}}\|
_{H^{s}\times H^{s}}+f(\psi
_{T}u,\psi_{T}w)\\
&  \leq c\| \overrightarrow{{u}_{0}}\| _{H^{s}\times H^{s}%
}+c\left(  \| \psi_{T}u\| _{(1,s,\frac12-\theta)}^{2}+\| \psi_{T}w\| _{(1,s,\frac12
-\theta)}^{2}\right)  \| \psi_{T}u\| _{(1,s,\frac12)}\\
&\hspace{0.4cm}  +c\| \psi_{T}u\| _{(1,s,\frac12-\theta)}\| \psi_{T}w\| _{(1,s,\frac12
-\theta)}\| \psi_{T}u\| _{(1,s,\frac12)}\\
& \hspace{0.4cm} +c\left(  \| \psi_{T}u\| _{(2,\frac12,0)}^{2}+\| \psi_{T}w\| _{(2,\frac12
,0)}^{2}\right)  \| \psi_{T}u\| _{(1,s,0)}\\
& \hspace{0.4cm} +c\| \psi_{T}u\| _{(2,\frac12,0)}\| \psi_{T}w\| _{(2,\frac12
,0)}\| \psi_{T}u\| _{(1,s,0)}\\
&  \leq c\| \overrightarrow{{u}_{0}}\| _{H^{s}\times H^{s}
}+cT^{\theta-\theta^{\prime}+\varepsilon}\|
\overrightarrow{u}\| _{Y_{s}\times Y_{s}}^{3}\leq
c\| \overrightarrow{{u}_{0}}\|
_{H^{s}\times H^{s}}+cT^{\theta-\theta^{\prime}+\varepsilon}a^{3}\text{,}%
\end{align*}
for $\overrightarrow{u}\in$ $Y_{T}^{a}$ and $0<\theta^{'}
-\varepsilon<\theta<\frac{1}{12}$. Analogously, for
$\overrightarrow
{u},\overrightarrow{v}\in$ $Y_{T}^{a}$ we have that%
\begin{equation}\label{4.47}
\|
\Phi(\overrightarrow{u})-\Phi(\overrightarrow{v})\|
_{Y_{s}\times Y_{s}}\leq
cT^{\theta-\theta^{'}+\varepsilon}a^{2}\|
\overrightarrow{u}-\overrightarrow{v}\| _{Y_{s}\times
Y_{s}}\text{.}
\end{equation}

We choose $\dfrac{a}{2}=c\| \overrightarrow{u_0}\| _{H^{s}}$ and $T$
small enough, such that $cT^{^{\theta-\theta^{'}+\varepsilon}}%
a^{3}\leq\dfrac{a}{2}$, then $\|
\Phi(\overrightarrow{u})\| _{Y_{s}\times Y_{s} }\leq a$
and $\Phi$ is a contraction. Uniqueness and continuous dependence follow in standard way.

\section{Proof of Theorem \ref{teo1.8}}

First we obtain conserved quantities for (\ref{1.1}) with $\sigma_{\alpha}=\sigma_{\beta}=\sigma_{\mu}=1$.

For $\Omega=\mathbb{T}$, define
\[
H_{0}(u,w)=2\mu\operatorname{Im}\left(  \int_{\Omega}((u\overline{u}_{x}%
)^{2}+(w\overline{w}_{x})^{2})dx\right)
+4\mu\operatorname{Im}\left(
\int_{\Omega}(u\overline{u}_{x}w\overline{w}_{x})dx\right)  \text{,}%
\]
\begin{align*}
H_{1}(u,w)  &  =\left(  \beta+2\mu\right)  \int_{\Omega}\left[ |
u_{x}| ^{2}\partial_{x}(| u| ^{2})+| w_{x}| ^{2}\partial_{x}\left(
| w| ^{2}\right)
\right]  dx\\
& \hspace{0.4cm} +\left(  \beta+2\mu\right)  \int_{\Omega}\left[  | u_{x}%
| ^{2}\partial_{x}(| w| ^{2})\right]  dx\\
& \hspace{0.4cm} +\left(  \beta+2\mu\right)  \int\left[  | w_{x}|
^{2}\partial_{x}\left(  | u| ^{2}\right)  \right]
dx+4\alpha\operatorname{Im}\left(  \int_{\Omega}(u\overline{u}_{x}%
w\overline{w}_{x})dx\right) \\
&\hspace{0.4cm}  +2\alpha\operatorname{Im}\left(  \int_{\Omega}((u\overline{u}_{x}%
)^{2}+(w\overline{w}_{x})^{2})dx\right)  \text{,}%
\end{align*}%
\begin{align*}
H_{2}(u,v)  &  =\left(  -\frac{1}{2}\beta+2\mu\right)
\int_{\Omega}\left(
\partial_{x}\left(  | u| ^{2}\right)  |
w| ^{4}+\partial_{x}\left(  | w| ^{2}\right)
| u| ^{4}\right)  dx\\
& \hspace{0.4cm} +q\operatorname{Im}\int_{\Omega}((u\overline{u}_{x})^{2}+(w\overline{w}%
_{x})^{2})dx\\
& \hspace{0.4cm} +\frac{3}{2}\gamma\int_{\Omega}\left[  | u_{x}|
^{2}\partial_{x}(| u| ^{2})+| w_{x}| ^{2}\partial_{x}\left( | w|
^{2}\right)  \right]  dx\text{
,}%
\end{align*}%
\begin{equation*}
\begin{split}
H_{3}(u,w)  &  =\left(  \frac{1}{2}\beta-2\mu\right)
\int_{\Omega}\left(
\partial_{x}\left(
| u| ^{2}\right)  | w| ^{4}+\partial_{x}\left(  | w| ^{2}\right) |
u| ^{4}\right)
dx\\
&+2q\operatorname{Im}\left(
\int_{\Omega}(u\overline{u}_{x}w\overline{w}_{x})dx\right) \\
& \hspace{0.4cm} +\frac{3}{2}\gamma\int_{\Omega}\left[  | u_{x}|
^{2}\partial_{x}(| w| ^{2})+| w_{x}| ^{2}\partial_{x}\left( | u|
^{2}\right)  \right] dx\text{.}
\end{split}
\end{equation*}

\begin{lemma}
Let $\overrightarrow{u_0}=(u_{0},w_{0})\in H^{s'}(\Omega
\mathbb{)}\times H^{s'}(\Omega\mathbb{)}$ with $s'$
large enough and $\overrightarrow{u}\in
C([-T,T];H^{s'}(\Omega\mathbb{)\times
}H^{s'}(\Omega\mathbb{))}$ solutions of (1.1) with
$\sigma_{\alpha}=\sigma_{\beta}=\sigma_{\mu}=1$. Then
\begin{equation}\label{5.1}
i\partial_{t}\left(  \int_{\Omega}(u\overline{u}_{x}+w\overline{w}%
_{x})dx\right)  =H_{0}(u,w)\text{,}
\end{equation}
\begin{equation}\label{5.2}
\partial_{t}\left(  \| u_{x}\| _{L^{2}\left(  \Omega\right)
}^{2}+\| w_{x}\| _{L^{2}\left(  \Omega\right)
}^{2}\right)
=-H_{1}(u,w)\text{,}
\end{equation}%
\begin{equation}\label{5.3}
\frac{1}{2}\partial_{t}\left(  \| u_{x}\|
_{L^{4}\left( \Omega\right)  }^{4}+\| w_{x}\|
_{L^{4}\left(  \Omega\right)
}^{4}\right)  =-H_{2}(u,w)\text{,}
\end{equation}%
\begin{equation}\label{5.4}
\partial_{t}\int_{\Omega}| u| ^{2}| w|
^{2}dx=-H_{3}(u,w)\text{.}
\end{equation}

\end{lemma}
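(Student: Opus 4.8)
The plan is to prove the four identities \eqref{5.1}--\eqref{5.4} by one uniform differentiate--substitute--integrate scheme. Since $s'$ is taken large, $\overrightarrow{u}$ is a classical solution, so differentiation under the integral sign and all the integrations by parts below are legitimate; moreover, working on $\mathbb{T}$ we may freely discard every integral $\int_{\Omega}\partial_x(\,\cdot\,)\,dx$ of a smooth periodic function. First I would solve each scalar equation in \eqref{1.1} for the time derivative. With $\sigma_\alpha=\sigma_\beta=\sigma_\mu=1$ this gives
\[
\partial_t u=\tfrac{iq}{2}\partial_x^2u-\tfrac{\gamma}{2}\partial_x^3u-\beta\big(|u|^2+|w|^2\big)\partial_x u-\mu\,u\,\partial_x\big(|u|^2+|w|^2\big)+i\alpha\,u\big(|u|^2+|w|^2\big),
\]
together with its complex conjugate and the $u\leftrightarrow w$ version, which (because $F_2(u,w)=F_1(w,u)$) is obtained by interchanging $u$ and $w$. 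This last symmetry halves the bookkeeping: the $w$-contribution to each $H_j$ is the $u$-contribution with the roles of $u$ and $w$ swapped.

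For \eqref{5.2} I would write $\partial_t\int_\Omega|u_x|^2\,dx=2\operatorname{Re}\int_\Omega u_{xt}\overline{u}_x\,dx=-2\operatorname{Re}\int_\Omega u_t\,\overline{u}_{xx}\,dx$ and substitute the evolution equation. The Schr\"odinger term contributes $-2\operatorname{Re}\big(\tfrac{iq}{2}\|u_{xx}\|_{L^2}^2\big)=0$, and the third-order term contributes $\gamma\operatorname{Re}\int_\Omega u_{xxx}\overline{u}_{xx}\,dx=\tfrac{\gamma}{2}\int_\Omega\partial_x|u_{xx}|^2\,dx=0$; thus the dispersive part drops out entirely, which is exactly why $H_1$ contains neither $q$ nor $\gamma$. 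What remains is the cubic nonlinearity, and a few integrations by parts (using $\operatorname{Re}(u\overline{u}_x)=\tfrac12\partial_x|u|^2$) reorganize it into $-H_1$. The momentum identity \eqref{5.1} is handled identically, starting from $i\partial_t\int_\Omega u\overline{u}_x\,dx=-2\operatorname{Im}\int_\Omega u_t\,\overline{u}_x\,dx$: the dispersive terms are again purely imaginary quadratics or total $x$-derivatives and vanish, the $\beta$-nonlinearity is pointwise real (so it drops under $\operatorname{Im}$), the self-interaction $\alpha$-term is a total derivative, and the cross $\alpha$-terms of the two equations cancel against one another, leaving only the $\mu$-quartics that constitute $H_0$.

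The identities \eqref{5.3} and \eqref{5.4} are where the dispersion no longer cancels, and I expect these to be the main obstacle. Differentiating the quartic functional $\tfrac12\int_\Omega(|u|^4+|w|^4)\,dx$ and the coupling functional $\int_\Omega|u|^2|w|^2\,dx$, and substituting $\partial_t u$, $\partial_t w$, the Schr\"odinger term $\tfrac{iq}{2}\partial_x^2$ now survives: after one integration by parts it produces the $q\operatorname{Im}$-integrals $\int_\Omega(u\overline{u}_x)^2\,dx$ in $H_2$ and the term $2q\operatorname{Im}\int_\Omega u\overline{u}_x w\overline{w}_x\,dx$ in $H_3$. The third-order term, after two integrations by parts, converts integrals of the type $\int_\Omega|u|^2\operatorname{Re}(u_{xxx}\overline{u})\,dx$ into the $\tfrac{3}{2}\gamma\int_\Omega|u_x|^2\partial_x(|u|^2)\,dx$ shape appearing in $H_2$ and $H_3$.

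The delicate point here is purely organizational: one must perform the repeated integrations by parts in the right order and keep careful track of the mixed $u$--$w$ terms generated by the coupling, so that all surviving dispersive and cubic contributions collapse exactly into the stated $H_2$ and $H_3$; in particular, the composite coefficients $-\tfrac12\beta+2\mu$ and $\tfrac12\beta-2\mu$ emerge only after combining the $\beta$- and $\mu$-nonlinearities through these manipulations. Finally I would record that \eqref{5.1}--\eqref{5.4}, together with the $L^2$-conservation \eqref{1.9}, are precisely the ingredients assembled into the invariant \eqref{1.10} that drives the proof of Theorem \ref{teo1.8}.
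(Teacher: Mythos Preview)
Your proposal is correct and follows essentially the same approach as the paper: the paper multiplies \eqref{1.1} by the test functions $\overline{u}_x$, $\overline{u}_{xx}$, $|u|^2\overline{u}$, $|w|^2\overline{u}$ (and the $w$-analogues), integrates, and takes real or imaginary parts, which is exactly your differentiate--substitute--integrate-by-parts scheme recast in multiplier form. One small wording point: in \eqref{5.1} the cross $\alpha$-terms from the two equations do not cancel pointwise but rather combine to a total $x$-derivative $\tfrac{\alpha}{2}\partial_x(|u|^2|w|^2)$, just as the paper's last displayed computation shows.
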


\begin{proof}
Multiply the first equation in (\ref{1.1}) by $\overline{u}_{x}$,
second equation by $\overline{w}_{x}$, integrate in $x$ and take
the real part to obtain
\[
\operatorname{Re}\left(
2i\int_{\Omega}u_{t}\overline{u}_{x}dx\right)
=i\int_{\Omega}(u_{t}\overline{u}_{x}-\overline{u}_{t}u_{x})dx=i\partial
_{t}\left(  \int_{\Omega}u\overline{u}_{x}dx\right)  \text{,}%
\]%
\begin{align*}
\operatorname{Re}\left(  2i\mu\int_{\Omega}u\partial_{x}(| u|
^{2})\overline{u}_{x}dx\right)   & =-\operatorname{Im}\left(
2\mu\int_{\Omega}u\partial_{x}(| u| ^{2})\overline{u}%
_{x}dx\right) \\
&  =-2\mu\operatorname{Im}\left(  \int_{\Omega}\left(  u\overline{u}%
_{x}\right)  ^{2}dx\right)  \text{,}%
\end{align*}%
\begin{align*}
&  \operatorname{Re}\left( 2i\mu\int_{\Omega}u\partial_{x}(| w|
^{2})\overline{u}_{x}dx\right)  +\operatorname{Re}\left(
2i\mu\int_{\Omega}w\partial_{x}(| u| ^{2})\overline{w}%
_{x}dx\right) \\
&  =-4\mu\operatorname{Im}\left(
\int_{\Omega}(u\overline{u}_{x}w\overline
{w}_{x})dx\right)  \text{,}
\end{align*}
\begin{align*}
&  \operatorname{Re}\left(  2\alpha\int_{\Omega}u(| u|
^{2}+| w| ^{2})\overline{u}_{x}\right)  +\operatorname{Re}%
\left(  2\alpha\int_{\Omega}w(| u| ^{2}+|
w| ^{2})\overline{w}_{x}\right) \\
&   =-\alpha\int_{\Omega}\partial_{x}(| w| ^{2})| u| ^{2}dx\text{
}+\alpha\int_{\Omega}\partial_{x}(|
w| ^{2})| u| ^{2}dx=0\text{ .}%
\end{align*}
The equality (\ref{5.1}) is obtained by adding the previous resulting equations.

To obtain (\ref{5.2}) multiply the first equation in (\ref{1.1})
by $\overline{u}_{xx}$, the second equation by $\overline
{w}_{xx}$, integrate in $x$, take the imaginary part and add the
resulting equations. The equality (\ref{5.3}) is obtained
multiplying the first equation in (\ref{1.1}) by $| u|
^{2}\overline{u}$, the second equation by $| w| ^{2}\overline{w}$,
integrate in $x$ , take the imaginary part and add the resulting
equations. Similarly, we obtain (\ref{5.4}) multiplying the first
equation of (\ref{1.1}) by $| w| ^{2}\overline{u}$, the second
equation by $| u| ^{2}\overline{w}$, integrate in $x$, take the
imaginary part and sum the resulting equations.

\end{proof}

\begin{lemma}
Let $\overrightarrow{u_0}\in H^{1}(\Omega\mathbb{)}\times
H^{1}\left( \Omega\right)  $ and $\overrightarrow{u}\in C\left(
[-T,T];H^{1}(\Omega \mathbb{)\times}H^{1}(\Omega\mathbb{)}\right)  $
solution of (\ref{1.1}).
Then,
\begin{equation}\label{5.5}
I_{1}(u,w)=\| u\| _{L^{2}\left(  \Omega\right)  }^{2}+\| w\| _{L^{2}\left(  \Omega\right)  }^{2}
=I_{1}(u_{0},w_{0}),
\end{equation}
\begin{equation}\label{5.6}
\begin{split}
I_{2}(u,v) =&\,i\left( -3\gamma\alpha\!+\!\beta q\!+\!2\mu q\right)
\int_{\Omega}(u\overline{u}_{x}+w\overline{w}_{x})dx
+\frac{3}{2}\gamma\int_{\Omega}(|u_{x}|^2+|w_{x}|^2)\,dx \\
&  +\frac{1}{2}\left(  \beta+2\mu\right) \int_{\Omega}(|u|^4+|w|^4)\,dx
+\left(\beta+2\mu\right) \int _{\Omega}|u|^2| w|^2\,dx\\
 =&\, I_{2}(u_0,w_0).
\end{split}
\end{equation}

\end{lemma}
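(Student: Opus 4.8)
The plan is to prove both conservation laws first for smooth solutions, where every integration by parts below is legitimate, and then to descend to $H^1\times H^1$ data by approximation. For the second step I would take $\overrightarrow{u_0}\in H^1\times H^1$, approximate it in the $H^1\times H^1$ norm by smooth data $\overrightarrow{u_0}^{(k)}\in H^{s'}\times H^{s'}$ with $s'$ large, solve (1.1) for each $k$ on a common interval $[-T,T]$ via Theorem \ref{teo1.7} with $s=1$, and use the Lipschitz dependence of the flow to get $\overrightarrow{u}^{(k)}\to\overrightarrow{u}$ in $C([-T,T];H^1\times H^1)$. Since $I_1$ and $I_2$ are continuous functionals on $H^1\times H^1$ (the worst terms, $\int|u_x|^2$, $\int|u|^4$ and $\int u\overline{u}_x$, are all controlled by the $H^1$ norm), the identities $I_j(\overrightarrow{u}^{(k)}(t))=I_j(\overrightarrow{u_0}^{(k)})$ pass to the limit and yield \eqref{5.5} and \eqref{5.6}. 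Thus it suffices to establish the two identities for smooth $\overrightarrow u$.

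For \eqref{5.5} I would multiply the first equation of (1.1) by $\overline u$ and the second by $\overline w$, integrate over $\Omega$ and take imaginary parts. The term $2i\int_\Omega u_t\overline u\,dx$ contributes $\partial_t\|u\|_{L^2}^2$, while the dispersive terms drop out after integration by parts, since $\int u_{xx}\overline u\,dx$ is real and $\int u_{xxx}\overline u\,dx$ is purely imaginary. The surviving nonlinear contribution $\operatorname{Im}\int_\Omega F_1\overline u\,dx$ reduces, using $\sigma_\alpha=\sigma_\beta=\sigma_\mu=1$, to $-\beta\int_\Omega(|u|^2+|w|^2)\partial_x|u|^2\,dx-2\mu\int_\Omega|u|^2\partial_x(|u|^2+|w|^2)\,dx$. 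Adding the analogous identity for $w$ and setting $S=|u|^2+|w|^2$, every remaining term becomes a multiple of $\int_\Omega S\,\partial_x S\,dx=\tfrac{1}{2}\int_\Omega\partial_x(S^2)\,dx=0$, so $\partial_t(\|u\|_{L^2}^2+\|w\|_{L^2}^2)=0$. The place where $\sigma_\alpha=\sigma_\beta=\sigma_\mu=1$ is essential is precisely this reassembly of the cross terms into the exact derivative $\partial_x(S^2)$.

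For \eqref{5.6} I would differentiate $I_2$ in $t$ and substitute the four identities \eqref{5.1}--\eqref{5.4} of the previous lemma. Reading the coefficients of $I_2$ off its definition and using that $i\,\partial_t\int_\Omega(u\overline{u}_x+w\overline{w}_x)\,dx=H_0$ absorbs the factor $i$ in front of the first coefficient, I obtain
\[
\partial_t I_2=c_0H_0-\tfrac{3}{2}\gamma H_1-(\beta+2\mu)H_2-(\beta+2\mu)H_3,\qquad c_0=-3\gamma\alpha+\beta q+2\mu q .
\]
It then remains to verify that this combination vanishes identically. I would expand $H_0,\dots,H_3$ into their constituent integrals and collect the coefficient of each one. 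The main obstacle is exactly this cancellation: the constituent quartic and sextic integrals agree only after integration by parts, so one must first bring all of them to a common normal form — grouping the $\operatorname{Im}\int(u\overline{u}_x)^2$-type terms, the $\int|u_x|^2\partial_x|u|^2$-type terms, and the $\int\partial_x(|u|^2)|w|^4$-type terms separately, noting that these three families have different homogeneity and hence cannot cancel across families — before checking that the numerical coefficient multiplying each family is zero. Once that bookkeeping is complete one gets $\partial_t I_2=0$, and the density argument of the first paragraph finishes the proof.
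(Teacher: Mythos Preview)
Your proposal is correct and follows essentially the same route as the paper. For \eqref{5.6} the paper's entire proof is to write down the linear combination
\[
\frac{-3\gamma\alpha+(\beta+2\mu)q}{2\mu}\,(\ref{5.1})+\frac{3\gamma}{2}\,(\ref{5.2})+(\beta+2\mu)\,(\ref{5.3})+(\beta+2\mu)\,(\ref{5.4}),
\]
which is exactly your strategy of differentiating $I_2$ and substituting the four identities of the preceding lemma; the paper neither proves \eqref{5.5} separately nor spells out the density argument, both of which you supply.
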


\begin{proof}
The following combination of
(\ref{5.1})-(\ref{5.4}) lead to (\ref{5.6})
\[
\left(  \frac{-3\gamma\alpha+(\beta+2\mu)q}{2\mu}\right)
 (4.1)+\frac{3\gamma }{2}(4.2)+\left(  \beta+2\mu\right)
(4.3)+\left( \beta+2\mu\right)
(4.4)\text{.}%
\]
\end{proof}
The global solution in Theorem \ref{teo1.8} follows from Theorem
1.6 and (4.5)-(4.6). \\

{\center{\bf{Final Remark}}}
\\

An interesting mathematical problem would be to study the local
well-posedness on the torus of the related higher-order nonlinear
Schr\"odinger system
\begin{equation*}
\left\{
\begin{array}
[c]{l}%
2i\partial_{t}u+q_1\partial_{x}^{2}u+i\gamma_1\partial_{x}^{3}u=F_{1}(u,w)\\
2i\partial_{t}w+q_2\partial_{x}^{2}w+i\gamma_2\partial_{x}^{3}w=F_{2}(u,w)\text{,}\\
u(x,0)=u_0,   w(x,0)=w_0,
\end{array}
\right.
\end{equation*}
when $q_1 \neq q_2$ and $\gamma_1\neq  \gamma_2.$\\
Note that in this case the resonant regions would be different
from the ones in \cite{4}, so the proofs of \cite{4} would need to
be carefully modified.



\end{document}